\newtheorem{theorem}{Theorem}
\newtheorem{corollary}[theorem]{Corollary}
\newtheorem{lemma}[theorem]{Lemma}
\newtheorem{proposition}[theorem]{Proposition}
\newtheorem{remark}[theorem]{Remark}
\begin{document}

\title{Two characterizations of simple circulant tournaments}
\author{Bernardo Llano\footnote{Research supported by
CONACYT Project CB-2012-01 178910.} \\
Departamento de Matem\'{a}ticas \\ Universidad Aut\'{o}noma Metropolitana, Iztapalapa  \\  Mexico}
\maketitle

\begin{abstract}

The \textit{acyclic disconnection} $\overrightarrow{\omega }(D)$
(resp. the
\textit{directed triangle free disconnection } $\overrightarrow{\omega }%
_{3}(D)$) of a digraph $D$ is defined as the maximum possible number
of connected components of the underlying graph of $D\setminus
A(D^{\ast })$ where $D^{\ast }$ is an acyclic (resp. a directed
triangle free) subdigraph
of $D$. We say that $\pi $ is an \textit{externally acyclic }(resp. a\textit{%
\ }$\overrightarrow{C}_{3}$\textit{-free})\textit{\ partition }if
the set of
external arcs of $D$ (an arc $(u,v)$ of $D$ is said to be \textit{external }%
if\textit{\ }$u$ and $v$ belong to different equivalence classes of
$\pi $) induces an acyclic subdigraph (resp. a
$\overrightarrow{C}_{3}$-free
subdigraph) of $D.$ A digraph $D$ is said to be $\overrightarrow{\omega }$%
\textit{-keen }(resp. $\overrightarrow{\omega
}_{3}$\textit{-keen})\textit{\ }if there exists an optimal
externally acyclic (resp. $\overrightarrow{C}_{3} $-free) partition
$\pi $ of $V(D)$ having exactly one equivalence class of
cardinality one and no externally acyclic (resp. $\overrightarrow{C}_{3}$%
-free) partition leaves more than one such a class. In this paper,
we generalize some previous results and solve some problems
posed by V. Neumann-Lara (The acyclic disconnection of a digraph,
Discrete Math. 197/198 (1999), 617-632). Let
$\overrightarrow{C}_{2n+1}(J)$ be a circulant
tournament. We prove that $\overrightarrow{C}_{2n+1}(J)$ is $\overrightarrow{%
\omega }$-keen and $\overrightarrow{\omega _{3}}$-keen, respectively, and $%
\overrightarrow{\omega }(\overrightarrow{C}_{2n+1}(J))=\overrightarrow{%
\omega }_{3}(\overrightarrow{C}_{2n+1}(J))=2$ for every $\overrightarrow{C}%
_{2n+1}(J)$. Finally, it is showed that $\overrightarrow{\omega }_{3}(%
\overrightarrow{C}_{2n+1}(J))=2$,  $\overrightarrow{C}_{2n+1}(J)$ is
simple and $J$ is aperiodic are equivalent propositions. The proofs
of the results are essentially based on classic results for abelian
groups of additive number theory.

\noindent \emph{Keywords: circulant tournament; acyclic disconnection; tight tournament; simple (or prime) tournament; addition theorems}

\noindent  \emph{Math. Subj. Class.: 05C20, 11P70}

\end{abstract}

\section{ Introduction}

The \textit{acyclic disconnection} $\overrightarrow{\omega }(D)$ of a
digraph $D$ is defined in \cite{VNL} as the maximum possible number of
connected components of the underlying graph of $D\setminus A(D^{\ast })$
where $A(D^{\ast })$ denotes the arc set of an acyclic subdigraph $D^{\ast }$
of $D$ (that is, $D^{\ast }$ contains no directed cycle). The so called
\textit{directed triangle-free disconnection }(in brief, the $%
\overrightarrow{C}_{3}$\textit{-free disconnection}) $\overrightarrow{\omega
}_{3}(D)$ of $D$ (a closed definition introduced\textit{\ } in the same
paper) is the maximum possible number of connected components of the
underlying graph of $D\setminus A(D^{\ast })$ where $D^{\ast }$ is a
directed triangle-free subdigraph of $D$ (that is, $D^{\ast }$ contains no $%
\overrightarrow{C}_{3}$). Equivalently, $\overrightarrow{\omega }(D)$ can be
defined as the maximum number of colors in a coloring of the vertices of $D$
such that no cycle is properly colored (in a proper coloring, consecutive
vertices of the directed cycle receive different colors). Similarly, the $%
\overrightarrow{C}_{3}$-\textit{free} \textit{disconnection} $%
\overrightarrow{\omega }_{3}(D)$ of $D$ is the maximum number of colors in a
coloring of the vertices of $D$ such that no $3$-cycle $\overrightarrow{C}%
_{3}$ is $3$-colored. A small value of both parameters implies a large
cyclic complexity of the digraph $D.$ Apart from the first and seminal
results for these parameters in general digraphs that were developed in \cite%
{VNL}, the acyclic and the $\overrightarrow{C}_{3}$-free disconnection have
mainly been studied for tournaments, specially, for circulant tournaments
(see for instance the already mentioned paper, \cite{GN} and \cite{Ll-NL})
and for some classes of (non-circulant) regular tournaments in \cite{Ll-O}.

The following definitions were introduced in \cite{VNL}. Let $k\in \mathbb{N}
$. Consider a partition $\pi =V_{1}\mid V_{2}\mid \cdots \mid V_{k}$  of the
vertex set of a digraph $D$ into nonempty subsets $V_{j}$ for $1\leq j\leq
k. $ An equivalence class $V_{j}$ of $\pi $ is \textit{singular }if\textit{\
}$V_{j}$ is a singleton. An arc $(u,v)\in A(D)$ is said to be \textit{%
external }if\textit{\ }$u$ and $v$ belong to different equivalence classes
of $\pi .$ We say that $\pi $ is an \textit{externally acyclic }(resp. a%
\textit{\ }$\overrightarrow{C}_{3}$\textit{-free})\textit{\ partition }if
the set of external arcs of $D$ induces an acyclic subdigraph (resp. a $%
\overrightarrow{C}_{3}$-free subdigraph) of $D$. An externally acyclic
(resp. $\overrightarrow{C}_{3}$-free) partition $\pi =V_{1}\mid \cdots \mid
V_{k}$ of $V(D)$ of a digraph $D$ is \textit{optimal} if $\overrightarrow{%
\omega }(D)=k$ (resp. $\overrightarrow{\omega }_{3}(D)=k$) . A digraph $D$
is said to be $\overrightarrow{\omega }$\textit{-keen }(resp. $%
\overrightarrow{\omega }_{3}$\textit{-keen})\textit{\ }if every optimal
externally acyclic (resp. $\overrightarrow{C}_{3}$-free) partition $\pi $ of
$V(D)$ has exactly one singular equivalence class.

In \cite{Ll-NL}, it was proved that every prime order circulant tournament $%
\overrightarrow{C}_{p}(J)$ with $p\geq 3$ (see the definition in the next
section) is $\overrightarrow{\omega }_{3}$-keen, $\overrightarrow{\omega }$%
-keen and $\overrightarrow{\omega }(\overrightarrow{C}_{p}(J))=$ $%
\overrightarrow{\omega }_{3}(\overrightarrow{C}_{p}(J))=2$.

In this paper, we generalize the aforementioned results and solve the
following problems posed in \cite{VNL}:

\begin{enumerate}
\item[(i)] On p. 623, end of Section 3, it is said: "We do not know whether
every circulant tournament is $\overrightarrow{\omega }$-keen (resp. $%
\overrightarrow{\omega }_{3}$-keen)." We prove that every circulant
tournament is $\overrightarrow{\omega }$-keen and $\overrightarrow{\omega }%
_{3}$-keen, see Theorems \ref{keen-w} and \ref{keen-w3}, respectively.

\item[(ii)] Problem 6.6.1. Characterize the circulant (or regular)
tournaments $T$ for which $\overrightarrow{\omega }(T)=2$ (resp. $%
\overrightarrow{\omega }_{3}(T)=2$). Are they the same? We prove that
\begin{equation*}
\overrightarrow{\omega }(\overrightarrow{C}_{2n+1}(J))=\overrightarrow{%
\omega }_{3}(\overrightarrow{C}_{2n+1}(J))=2
\end{equation*}
for every simple circulant tournaments $\overrightarrow{C}_{2n+1}(J)$ (a
consequence of Theorem \ref{circ-tight} and Corollary \ref{final}).

\item[(iii)] Problem 6.6.3: $\overrightarrow{\omega }(\overrightarrow{C}%
_{2n+1}(J))=$ $\overrightarrow{\omega }_{3}(\overrightarrow{C}_{2n+1}(J))$
for every $\overrightarrow{C}_{2n+1}(J)$ circulant tournament (Corollary \ref%
{final}). This problem remains open when $T$ is regular in general.
\end{enumerate}

We give an interesting characterization of compositions of circulant
tournaments in terms of an additive combinatorial property of their
symbol sets (Theorem \ref{comp-quasi}) and in terms of their
$\overrightarrow{C}_{3} $-free disconnection (Theorem
\ref{circ-tight}, a fact conjectured by V. Neumann-Lara in a
personal communication). The main results of these paper are
summarized in Corollary \ref{char}. The proofs of the theorems are
essentially based on classic results for abelian groups of
additive number theory. Additive (or combinatorial as P. Erd\H{o}s used to call it) number
theory could be described as the study of the discrete properties of
subsets of (abelian) groups in general. In this work, we focus on
applying these tools to the group of residues modulo a positive integer $n$ in order to
solve three coloring and structural problems for tournaments.

\section{Preliminary results}

A tournament $T$ on $n$ vertices is an orientation of the complete graph $%
K_{n}$. For every tournament $T,$ we have that $\overrightarrow{\omega }%
_{3}(T)\geq 2$ (\cite{VNL}, Remark 4.1.(i)). We say that $T$ is \textit{tight%
} if $\overrightarrow{\omega }_{3}(T)=2.$

\begin{proposition}[\protect\cite{VNL}, Corollary 2.6]
If $T$ is a regular tournament, then every externally $\overrightarrow{C}%
_{3} $-free partition of $T$ has at most one singular equivalence class.%
\label{prop1}
\end{proposition}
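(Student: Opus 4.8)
Proof proposal for Proposition \ref{prop1} (the statement attributed to \cite{VNL}, Corollary 2.6):

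The plan is to argue by contradiction, exploiting the regularity of $T$ to force a directed triangle among the external arcs once two distinct singletons are present. Suppose $\pi = V_1 \mid \cdots \mid V_k$ is an externally $\overrightarrow{C}_3$-free partition of a regular tournament $T$ on $2n+1$ vertices that has at least two singular classes, say $V_i = \{x\}$ and $V_j = \{y\}$ with $x \neq y$. Every arc incident to $x$ or to $y$ is external, except the single arc between $x$ and $y$ itself. Without loss of generality assume $(x,y) \in A(T)$. First I would record that, since $T$ is regular, $d^+(x) = d^-(x) = n$ and likewise for $y$; in particular $x$ has $n$ out-neighbours and $y$ has $n$ in-neighbours among the remaining $2n-1$ vertices (those other than $x,y$), plus the arc $(x,y)$ accounts for one more out-neighbour of $x$ and in-neighbour of $y$.

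The key counting step: among the $2n-1$ vertices in $V(T)\setminus\{x,y\}$, let $A$ be the set of common out-neighbours of... wait, rather the relevant set is $S = N^+(x)\setminus\{y\}$ and $R = N^-(y)\setminus\{x\}$, so $|S| = |R| = n-1$ (removing $y$ from $N^+(x)$ and $x$ from $N^-(y)$, using $(x,y)\in A(T)$), and both are subsets of a ground set of size $2n-1$. Then $|S| + |R| = 2n-2 < 2n-1$, so a naive intersection bound gives nothing; instead I would look at $N^-(x)$ and $N^-(y)$ or use the complement. Reconsider: take $S = N^+(x)\cap (V(T)\setminus\{x,y\})$, which has size $n-1$, and $R' = N^-(y)\cap(V(T)\setminus\{x,y\})$, size $n-1$ as well. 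The complements within the $(2n-1)$-set have sizes $n$ and $n$, summing to $2n > 2n-1$, so there is a vertex $z \notin N^+(x)$ and $z\notin N^-(y)$, i.e. $(z,x)\in A(T)$ and $(y,z)\in A(T)$. Then $x \to y \to z \to x$ is a directed triangle, and all three of its arcs $(x,y)$, $(y,z)$, $(z,x)$ are external: $(y,z)$ and $(z,x)$ because $x,y$ are singletons and $z \neq x,y$, and $(x,y)$ because $V_i \neq V_j$. This contradicts the hypothesis that $\pi$ is externally $\overrightarrow{C}_3$-free.

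I expect the only delicate point to be the bookkeeping around the arc between the two singletons — one must be careful which of $(x,y)$, $(y,x)$ holds and correspondingly whether to intersect out-neighbourhoods or in-neighbourhoods, so that the pigeonhole count on a set of size $2n-1$ goes through with the two complementary sets of size $n$. Everything else is immediate: regularity supplies the degree values, and the definition of external arc makes all triangle edges external the moment two singleton classes exist. One should also note $z$ is automatically distinct from $x$ and $y$ since it lies in $V(T)\setminus\{x,y\}$, so the three vertices really do form a $\overrightarrow{C}_3$. This completes the contradiction and hence the proof.
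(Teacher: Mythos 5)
Your argument is correct. The paper itself does not prove this proposition --- it is quoted from \cite{VNL} (Corollary 2.6) without proof --- so the comparison is with the cited source rather than with an in-paper argument. Your route is a clean, self-contained one: if two distinct singular classes $\{x\}$, $\{y\}$ existed, then (taking $(x,y)\in A(T)$ without loss of generality) regularity gives $|N^{+}(y)|=|N^{-}(x)|=n$, and since $x\to y$ forces both sets to lie inside $V(T)\setminus\{x,y\}$, which has only $2n-1$ elements, they intersect in some $z$, producing the directed triangle $x\to y\to z\to x$; all three arcs are external because $x$ and $y$ are singletons and $z$ lies in some other class, contradicting the externally $\overrightarrow{C}_{3}$-free hypothesis. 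The pigeonhole count is exactly right (your complement bookkeeping resolves correctly: the complements of $N^{+}(x)\setminus\{y\}$ and $N^{-}(y)\setminus\{x\}$ inside the $(2n-1)$-set are $N^{-}(x)$ and $N^{+}(y)$, each of size $n$), and the externality check is complete. In effect you have re-derived, for a single arc, the fact the paper later attributes to Alspach \cite{Als} (every arc of a regular tournament lies in a directed cyclic triangle) and then observed that any such triangle through the arc joining two singleton classes consists entirely of external arcs; in \cite{VNL} the corollary is obtained from the general machinery developed there for external partitions, so your proof is the more elementary and essentially standalone of the two. The only cosmetic remark is that the exploratory back-and-forth in your middle paragraph should be trimmed to the final, correct count.
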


\begin{remark}
(i) As a consequence of this proposition, if a regular tournament $T$ has an
optimal externally $\overrightarrow{C}_{3}$-free partition with a unique
singular equivalence class, then $T$ is $\overrightarrow{\omega }_{3}$-keen.

(ii) In particular, if $\overrightarrow{\omega }_{3}(T)=2$ (that is, $T$ is
tight), where $T$ is regular, then $T$ is $\overrightarrow{\omega }_{3}$%
-keen.\label{obs1}
\end{remark}

Throughout this paper, we deal with regular tournaments which are well-known
to have odd order. We denote by $\mathbb{Z}_{2m+1}$ the group of residues
modulo $2m+1,$ where $m$ is a positive integer. Let $J\subseteq \mathbb{Z}%
_{2m+1}\setminus \{0\}$ such that $\left\vert J\right\vert =m$ and $%
\left\vert \{j,-j\}\cap J\right\vert =1$ for every $j\in J$ (or
equivalently, $\overline{J}=-J\cup \{0\}).$ A \textit{circulant tournament }$%
T=\overrightarrow{C}_{2m+1}(J)$ (or a rotational tournament $RT_{2m+1}(J),$ $%
m\in \mathbb{N}$, see \cite{Reid}) with \textit{symbol set }$J$ is defined
by
\begin{eqnarray*}
V(\overrightarrow{C}_{2m+1}(J)) &=&\mathbb{Z}_{2m+1}\text{ and} \\
A(\overrightarrow{C}_{2m+1}(J)) &=&\left\{ (i,j):i,j\in \mathbb{Z}_{2m+1}%
\text{ and }j-i\in J\right\} .
\end{eqnarray*}
Notice that from the definition, $0\notin J+J$ and $\left\vert -J\right\vert
=m.$ It is well-known that a circulant tournament is regular and
vertex-transitive, that is for every pair of vertices $u,v\in V(%
\overrightarrow{C}_{2m+1}(J))$, there exists an automorphism of $%
\overrightarrow{C}_{2m+1}(J)$ that maps $u$ to $v$. This property is constantly used in proofs as it shows that circulant tournaments are highly symmetric structures.

The tournament $\overrightarrow{C}_{2m+1}(1,2,...,m)$ is called the \textit{%
cyclic tournament. }It is easy to check that for all $m$ there is only one
cyclic tournament up to isomorphism.

Let $D$ and $F$ be digraphs and $F_{v}$ a family of mutually disjoint
isomorphic copies of $F$ for all $v\in V(D).$ The \textit{composition }(or
\textit{lexicographic product}, see \cite{K-I})\textit{\ }$D[F]$ of the
digraphs $D$ and $F$ is defined by
\begin{eqnarray*}
V(D[F]) &=&\bigcup_{v\in V(D)}V(F_{v})\text{ and} \\
A(D[F]) &=&\bigcup_{v\in V(D)}A(F_{v})\cup \{(i,j):i\in V(F_{v}),\text{ }%
j\in V(F_{w})\text{ and }(v,w)\in A(D)\}.
\end{eqnarray*}

The composition of digraphs is not commutative, but it is associative (an
easy proof left to the reader). A digraph $D$ is said to be \textit{simple }%
(or \textit{prime}, see \cite{K-I}, p. 116) if it is not isomorphic to a
composition of digraphs (see \cite{Moon}).

\begin{proposition}[\protect\cite{VNL}, Neumann-Lara]
If $D$ is a $\overrightarrow{\omega }_{3}$-keen (resp. $\overrightarrow{%
\omega }$-keen) digraph and $F$ is a digraph, then
\begin{eqnarray*}
\overrightarrow{\omega }_{3}(D[F]) &=&\overrightarrow{\omega }_{3}(D)+%
\overrightarrow{\omega }_{3}(F)-1 \\
\text{(resp. }\overrightarrow{\omega }(D[F]) &=&\overrightarrow{\omega }(D)+%
\overrightarrow{\omega }(F)-1\text{).}
\end{eqnarray*}
Moreover, if $D$ and $F$ are $\overrightarrow{\omega }_{3}$-keen (resp. $%
\overrightarrow{\omega }$-keen) digraphs, then $D[F]$ is also $%
\overrightarrow{\omega }_{3}$-keen (resp. $\overrightarrow{\omega }$-keen). %
\label{w3-comp}
\end{proposition}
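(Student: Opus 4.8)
The plan is to prove the displayed identity in the $\overrightarrow{C}_3$-free version (the acyclic version follows by the same scheme with ``directed triangle'' replaced throughout by ``directed cycle''), by establishing the two opposite inequalities, and then to extract the ``moreover'' clause from the equality case. Write $k=\overrightarrow{\omega}_3(D)$ and $\ell=\overrightarrow{\omega}_3(F)$, and use the colouring reformulation from the Introduction: an externally $\overrightarrow{C}_3$-free partition is a vertex-colouring in which no $\overrightarrow{C}_3$ is rainbow. The one structural fact driving everything is that a directed triangle of $D[F]$ is either contained in a single fibre $F_v$ (and is then a directed triangle of $F$) or has its three vertices in three distinct fibres $F_u,F_v,F_w$ with $uvw$ a directed triangle of $D$; it cannot meet exactly two fibres, since the two arcs joining those fibres point the same way.

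For the lower bound I would exploit keenness of $D$ to fix an optimal externally $\overrightarrow{C}_3$-free partition $\pi_D=V_1\mid\cdots\mid V_{k-1}\mid\{v_0\}$ of $D$ whose single singular class is $\{v_0\}$, and an optimal externally $\overrightarrow{C}_3$-free partition $\pi_F=W_1\mid\cdots\mid W_\ell$ of $F$. Then blow up the singular vertex: give every vertex of $\bigcup_{v\in V_i}V(F_v)$ colour $i$ for $1\le i\le k-1$, and colour $F_{v_0}$ by $\pi_F$ with $\ell$ new colours, for a total of $(k-1)+\ell$ colours on $D[F]$. By the structural fact, a rainbow triangle of this colouring lies inside $F_{v_0}$ --- impossible, since it would be a rainbow triangle of $\pi_F$ --- or sits over a directed triangle $uvw$ of $D$, in which case, because $\{v_0\}$ is a whole class of $\pi_D$ and $u,v,w$ are pairwise distinct, it projects to a rainbow triangle of $\pi_D$, again impossible. (In the acyclic case one instead shows that a properly coloured directed cycle of the blow-up colouring either lies inside $F_{v_0}$, or, after contracting maximal monochromatic runs on its $D$-projection and passing to a shortest closed subwalk, produces a properly coloured directed cycle of $D$.) Hence $\overrightarrow{\omega}_3(D[F])\ge k+\ell-1$.

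For the upper bound, take an externally $\overrightarrow{C}_3$-free colouring $c$ of $D[F]$ with $N$ colours. Since every directed triangle of the fibre $F_v$ is a directed triangle of $F$, the restriction $c|_{F_v}$ is $\overrightarrow{C}_3$-free on $F$, so $|c(F_v)|\le\ell$; let $t=\max_v|c(F_v)|\le\ell$, fix $v^\ast$ attaining it, and put $\mathcal C^\ast=c(F_{v^\ast})$. The idea is to produce from $c$ an externally $\overrightarrow{C}_3$-free colouring of $D$ using at least $N-t+1$ colours, whence $N-t+1\le k$ and so $N\le k+\ell-1$. To do this I would first collapse all colours of $\mathcal C^\ast$ into a single colour $\star$ (this cannot create a rainbow triangle), and then pick for each $v$ a representative colour of the collapsed colouring on $F_v$ --- namely $\star$ when $c(F_v)\subseteq\mathcal C^\ast$, and otherwise a colour of $c(F_v)\setminus\mathcal C^\ast$ chosen so that every colour of $\mathcal C\setminus\mathcal C^\ast$ is represented by some fibre. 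The main obstacle is precisely here: one must (a) show the representatives can be chosen simultaneously --- a defect/Hall-type argument exploiting $|c(F_v)|\le\ell$ --- and, more delicately, (b) verify that the resulting colouring of $D$ has no rainbow triangle, by splitting a directed triangle $uvw$ of $D$ into cases according to how many of $u,v,w$ receive colour $\star$ and lifting a hypothetical rainbow triangle of $D$ back to a rainbow triangle of $D[F]$; keenness of $D$ enters in that the bound obtained for the $D$-colouring is exactly $k$.

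The ``moreover'' clause then follows from rigidity of the equality case: if $D$ and $F$ are both keen and $\pi$ is any optimal externally $\overrightarrow{C}_3$-free partition of $D[F]$, it has exactly $k+\ell-1$ classes, so both inequalities are equalities for $\pi$. Running the upper-bound argument with equality everywhere forces $t=\ell$, forces the induced colouring of $D$ to be an optimal externally $\overrightarrow{C}_3$-free partition of $D$ --- which by keenness of $D$ has a unique singular class, say $\{v_0\}$ --- and forces the partition that $\pi$ induces on the fibre $F_{v_0}$ to be an optimal externally $\overrightarrow{C}_3$-free partition of $F$ --- which by keenness of $F$ has a unique singular class; every other class of $\pi$ is a union of whole fibres over a non-singular class of the induced $D$-partition (hence of size at least $2\,|V(F)|$) or a non-singular class inside $F_{v_0}$. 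Counting, $\pi$ has exactly one singular class, so $D[F]$ is $\overrightarrow{\omega}_3$-keen; the $\overrightarrow{\omega}$-keen statement is obtained identically with ``directed cycle'' replacing ``directed triangle''.
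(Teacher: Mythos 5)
The paper itself gives no proof of this proposition (it is quoted from \cite{VNL}), so your attempt can only be judged on its own terms. Your lower bound is fine: keenness of $D$ supplies an optimal partition with a singular class $\{v_0\}$, the blow-up colouring with $(k-1)+\ell$ colours is externally $\overrightarrow{C}_3$-free because a directed triangle of $D[F]$ meets either one fibre or three fibres, and the parenthetical contraction argument for the acyclic case can be made to work.

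The genuine gap is the upper bound, which is the substantive half of the statement, and there your text is a plan rather than a proof: you yourself flag steps (a) and (b) as ``the main obstacle'' and leave them unresolved. In fact (b) is the routine part: for a directed triangle $uvw$ of $D$, every transversal of the fibres is a triangle of $D[F]$, so the palettes $c(F_u),c(F_v),c(F_w)$ admit no system of distinct representatives; by Hall this means either two palettes are the same singleton or their union has at most two colours, and in either case \emph{any} choice of representatives (with the $\star$ convention) is non-rainbow. The real problem is (a): you need the representative map to use at least $N-t+1$ colours, i.e.\ one chosen colour per fibre must cover all of $\mathcal{C}\setminus\mathcal{C}^{*}$, and this is a Hall condition on the colour--fibre incidence that the validity of $c$ alone does not give -- the triangle constraint says nothing about two fibres lying on no common directed triangle of $D$, each of which may carry several private colours. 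Worse, your scheme never actually uses keenness of $D$ (saying ``the bound for the $D$-colouring is exactly $k$'' is just the definition of $\overrightarrow{\omega}_3(D)$), yet the upper bound is false without keenness: for $D=F=TT_2$ one has $D[F]\cong TT_4$ and $\overrightarrow{\omega}_3(D[F])=4>k+\ell-1=3$, and in the $4$-colouring of $TT_4$ the non-maximal fibre holds two private colours, which is exactly the failure of your covering step. So the place where keenness must do real work is precisely the step you have not supplied. Consequently the ``moreover'' clause is also unestablished, since you derive it by ``running the upper-bound argument with equality'' and you additionally invoke, without proof, the structural claim that in an optimal partition of $D[F]$ every class not inside the distinguished fibre is a union of whole fibres; likewise the acyclic version of the upper bound does not follow verbatim, because the analysis of properly coloured cycles through the fibres is not the SDR analysis for triangles.
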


We define the interval of positive integers $[1,k]=\{1,2,...,k\}$ $(k\geq
1). $

\begin{proposition}[\protect\cite{VNL}, Neumann-Lara]
Let $\overrightarrow{C}_{2m+1}(J)$ and $\overrightarrow{C}_{2n+1}(K)$ be
circulant tournaments. Then the composition $\overrightarrow{C}_{2m+1}(J)%
\left[ \overrightarrow{C}_{2n+1}(K)\right] $ is a circulant tournament
isomorphic to $\overrightarrow{C}_{\left( 2m+1\right) \left( 2n+1\right)
}(L),$ where the symbol set \label{comp}%
\begin{equation*}
L=(2m+1)K\cup (J+(2m+1)[1,2n+1]).
\end{equation*}
\end{proposition}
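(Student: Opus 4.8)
The plan is to write down an explicit vertex bijection and check that it transports arcs to arcs. Put $N=(2m+1)(2n+1)$. Identify the vertex set of $\overrightarrow{C}_{2m+1}(J)[\overrightarrow{C}_{2n+1}(K)]$ with $\mathbb{Z}_{2m+1}\times\mathbb{Z}_{2n+1}$, where $(v,i)$ denotes vertex $i$ of the copy $F_{v}$ attached to $v\in V(\overrightarrow{C}_{2m+1}(J))$, and define $\phi\colon\mathbb{Z}_{2m+1}\times\mathbb{Z}_{2n+1}\to\mathbb{Z}_{N}$ by $\phi(v,i)=\overline{v}+(2m+1)\,\overline{i}\pmod{N}$, with $\overline{v}\in\{0,\dots,2m\}$ and $\overline{i}\in\{0,\dots,2n\}$ the canonical representatives. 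This is the mixed-radix enumeration of $\{0,\dots,N-1\}$, hence a bijection; it cannot be replaced by a Chinese--Remainder map because $\gcd(2m+1,2n+1)$ need not be $1$, and keeping this in mind is the one genuinely delicate point in the set-up. Under $\phi$ the copies $F_{v}$ become the cosets $\overline{v}+H$ of the unique subgroup $H\leq\mathbb{Z}_{N}$ of order $2n+1$, namely $H=\{0,\,2m+1,\,2(2m+1),\dots,2n(2m+1)\}$.

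Next I would put the target symbol set in group-theoretic form. Since $(2n+1)(2m+1)\equiv 0\pmod N$, we have $(2m+1)[1,2n+1]\equiv H\pmod N$, so $L\equiv(2m+1)K\cup(J+H)\pmod N$. A brief count shows this is a legitimate symbol set: the cosets $j+H$ (for $j\in J$) are pairwise disjoint and disjoint from $(2m+1)K\subseteq H$, and $0\notin L$; moreover $K\cup(-K)\cup\{0\}=\mathbb{Z}_{2n+1}$ gives $(2m+1)K\cup(2m+1)(-K)\cup\{0\}=H$, while $J\cup(-J)=\mathbb{Z}_{2m+1}\setminus\{0\}$ gives $(J+H)\cup(-J+H)=\{x\in\mathbb{Z}_{N}: x\not\equiv 0\pmod{2m+1}\}$; hence $L\sqcup(-L)=\mathbb{Z}_{N}\setminus\{0\}$. (This also follows a posteriori once $\phi$ is shown to be an isomorphism, because a composition of tournaments is a tournament.)

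Then I would check arc preservation, distinguishing the two types of arcs of the composition. For an inner arc, $((v,i),(v,j))$ is an arc iff $j-i\in K$ in $\mathbb{Z}_{2n+1}$, and then $\phi(v,j)-\phi(v,i)\equiv(2m+1)(\overline{j}-\overline{i})\equiv(2m+1)(j-i)\pmod N$, which lies in $(2m+1)K$; conversely every element of $(2m+1)K$ is realized inside a fixed copy. For an external arc, with $v\neq w$, $((v,i),(w,\ell))$ is an arc iff $(v,w)\in A(\overrightarrow{C}_{2m+1}(J))$, i.e.\ $w-v\in J$ in $\mathbb{Z}_{2m+1}$; writing $\overline{w}-\overline{v}=j+(2m+1)c$ with $j\in J$ and $c\in\mathbb{Z}$, we get $\phi(w,\ell)-\phi(v,i)\equiv j+(2m+1)(c+\overline{\ell}-\overline{i})\in J+H\pmod N$, and every element of $J+H$ is attained (take $v=0$, $w$ a lift of $j$, and $i,\ell$ suitable). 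Thus $\phi(w,\ell)-\phi(v,i)\in L$ exactly when $((v,i),(w,\ell))\in A(\overrightarrow{C}_{2m+1}(J)[\overrightarrow{C}_{2n+1}(K)])$, which together with bijectivity of $\phi$ and $L\sqcup(-L)=\mathbb{Z}_{N}\setminus\{0\}$ identifies the composition with $\overrightarrow{C}_{N}(L)$.

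The only real obstacle is bookkeeping across the three moduli $2m+1$, $2n+1$ and $N$: one must keep track that $\overline{j}-\overline{i}$ still represents the element $j-i\in K$ after reduction mod $2n+1$, that $(2m+1)x\equiv(2m+1)(x\bmod(2n+1))\pmod N$, and --- crucially --- that starting the window at $1$ and ending it at $2n+1$ is exactly what makes $(2m+1)[1,2n+1]$ wrap up into the full subgroup $H$. Once $\phi$ is fixed, each of the verifications above is a one-line congruence.
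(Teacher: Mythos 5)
Your proof is correct. There is nothing in the paper to compare it against: Proposition \ref{comp} is quoted from \cite{VNL} as a known preliminary and no proof is given here, so a self-contained verification is exactly what is called for. Your map $\phi(v,i)=\overline{v}+(2m+1)\overline{i}\bmod N$ is the natural mixed-radix isomorphism, and your identification of $(2m+1)[1,2n+1]$ modulo $N$ with the unique subgroup $H$ of order $2n+1$ is precisely the reading of the symbol set $L$ that the paper itself uses later, in the proof of Theorem \ref{comp-quasi}. One small point of bookkeeping: your ``conversely'' sentences address attainability of every element of $(2m+1)K$ and of $J+H$ as a difference, which is not literally the converse implication ``$\phi(w,\ell)-\phi(v,i)\in L$ implies arc''; but this costs nothing, since reducing the difference modulo $2m+1$ shows a difference in $H$ forces $w=v$ (hence the inner case) and a difference in $j+H$ forces $w-v=j\in J$ (hence the external case), and in any event, once $L$ is checked to be a legitimate symbol set --- which you do --- the forward implication together with bijectivity of $\phi$ already suffices, because the reversed pair then has its difference in $-L$. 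So the argument stands as written, with that one sentence best tightened.
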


Given nonempty subsets $A,B\subseteq \mathbb{Z}_{n},$ we define%
\begin{eqnarray*}
A+B &=&\left\{ a+b:a\in A,b\in B\right\} ,\text{ }cA=\{ca:a\in A,c\in
\mathbb{Z}_{n}\}, \\
-A &=&\left\{ -a:a\in A\right\} \text{ and }A-B=A+(-B).
\end{eqnarray*}

If $A$ and $B$ are sets, we denote the set difference by $A\setminus B.$ A
set $\emptyset \neq A\subseteq \mathbb{Z}_{n}$ is an \textit{arithmetic
progression} if there exist $a,d\in \mathbb{Z}_{n},$ $d\neq 0$ such that
\begin{equation*}
A=\left\{ a+id : 0\leq i\leq \left\vert A\right\vert -1\right\} .
\end{equation*}

Let $A,$ $B$ and $C$ nonempty subsets of $\mathbb{Z}_{n}.$ As a simple
consequence of the above definitions we have (see \cite{Vos})
\begin{equation}
(A+B)\cap C=\varnothing \Leftrightarrow A\cap (B-C)=\varnothing .
\label{SumInt}
\end{equation}

Let $G$ be an abelian group written additively and $\varnothing \neq
C\subseteq G.$ Following \cite{Kemp} (and \cite{Gryn}), we define the
\textit{period} of the set $C$ as%
\begin{equation*}
H(C)=\{g\in G:C+g=C\}.
\end{equation*}%
Then $C+H(C)=C$ and $H(C)$ is a subgroup of $G.$ We say that $C$ is \textit{%
periodic }if $H(C)\neq \{0\}$ and \textit{aperiodic }if $H(C)=\{0\}.$
Observe that $C$ is periodic if and only if $C$ is the union of $H(C)$%
-cosets The set $C$ is called \textit{quasi-periodic} if there exists $H,$ a
nontrivial subgroup of $G,$ such that $C=C^{\prime }\cup C^{\prime \prime }$
($C^{\prime }$ and $C^{\prime \prime }$ could be empty) and $C^{\prime }\cap
C^{\prime \prime }=\varnothing ,$ where

\begin{enumerate}
\item[(i)] $C^{\prime }$ is periodic and

\item[(ii)] $C^{\prime \prime }$ is properly contained in an $H$-coset, that
is, $C^{\prime \prime }\subset c+H,$ where $c\in C^{\prime \prime }.$
\end{enumerate}

In this setting, notice that if $C$ is periodic, then it is quasi-periodic.
It is straightforward to check that an arithmetic progression of length at
least $3$ is an aperiodic set.

\begin{theorem}[\protect\cite{Kne53} Kneser]
Let $G$ be a non-trivial abelian group and $A$ and $B$ nonempty finite
subsets of $G.$ Then $H=H(A+B)$ satisfies that
\begin{equation*}
\left\vert A+B\right\vert \geq \left\vert A+H\right\vert +\left\vert
B+H\right\vert -\left\vert H\right\vert \geq \left\vert A\right\vert
+\left\vert B\right\vert -\left\vert H\right\vert
\end{equation*}
Moreover, $A+B$ is periodic if $\left\vert A+B\right\vert \leq \left\vert
A\right\vert +\left\vert B\right\vert -2.$ \label{Kneser}
\end{theorem}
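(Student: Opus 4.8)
The plan is to peel off the formal implications, reduce everything to one clean inequality, and then attack that by a minimal-counterexample argument based on Dyson transforms. The second displayed inequality is immediate: since $A \subseteq A+H$ and $B \subseteq B+H$, we have $|A+H| \ge |A|$ and $|B+H| \ge |B|$. The ``moreover'' clause is a consequence of the first inequality, because if $A+B$ were aperiodic, i.e. $H := H(A+B) = \{0\}$, then the first inequality would read $|A+B| \ge |A|+|B|-1$, contradicting $|A+B| \le |A|+|B|-2$. Finally, the first inequality follows from the formally weaker assertion (K): $|X+Y| \ge |X|+|Y|-|H(X+Y)|$ for all finite nonempty $X,Y \subseteq G$, by applying (K) to $X = A+H$ and $Y = B+H$; indeed, since $A+B$ is $H$-periodic we have $(A+H)+(B+H) = A+B+H = A+B$, hence $H\big((A+H)+(B+H)\big) = H$, so (K) becomes exactly $|A+B| \ge |A+H|+|B+H|-|H|$. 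Thus it suffices to prove (K).

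Suppose (K) fails, and choose a counterexample $(X,Y)$ with $|Y|$ minimal. Since $|Y| = 1$ gives $|X+Y| = |X| \ge |X|+1-|H(X+Y)|$, we must have $|Y| \ge 2$; translate so that $0 \in X \cap Y$, and put $S = X+Y$, $H = H(S)$. If $Y \subseteq H(X)$, then $X \subseteq S = X+Y \subseteq X+H(X) = X$, so $S = X$, $H = H(X) \supseteq Y$, and $|S| = |X| \ge |X|+|Y|-|H|$ because $|Y| \le |H|$; this contradicts the choice of $(X,Y)$, so $Y \not\subseteq H(X)$.

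Because $Y \not\subseteq H(X)$, some $e \in X$ satisfies $Y+e \not\subseteq X$ (otherwise $Y+X \subseteq X$, and since each $y+X$ has the same cardinality as $X$ this forces every $y \in Y$ into $H(X)$). For such an $e$ the Dyson transform $X(e) := X \cup (Y+e)$, $Y(e) := Y \cap (X-e)$ satisfies $\varnothing \ne Y(e) \subsetneq Y$, $0 \in X(e) \cap Y(e)$, $X(e)+Y(e) \subseteq S$, and $|X(e)|+|Y(e)| = |X|+|Y|$. Since $|Y(e)| < |Y|$, the pair $(X(e),Y(e))$ is not a counterexample, so $|S| \ge |X(e)+Y(e)| \ge |X|+|Y|-|H'|$ with $H' := H(X(e)+Y(e))$; combined with $|S| \le |X|+|Y|-|H|-1$ this forces $|H'| > |H|$ --- and the same holds for \emph{every} admissible $e$.

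Turning this last conclusion into a contradiction is the real obstacle. The key observation is that a transform can enlarge the stabilizer of the sumset only if it strictly shrinks the sumset, since $X(e)+Y(e) = S$ would give $H' = H$; so one must show that some admissible transform --- or some pair reached after iterating transforms and, if necessary, passing to the quotient $G/H$, where the image sumset is aperiodic and (K) reads $|\bar X + \bar Y| \ge |\bar X|+|\bar Y|-1$ --- leaves the sumset unchanged. Extracting the relevant subgroup from the rigidity of the ``every admissible transform enlarges the stabilizer'' configuration, and organizing the descent so that it terminates, is exactly the intricate case analysis that gives Kneser's theorem its reputation; I see no single manipulation that bypasses it.
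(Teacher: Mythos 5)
This statement is Kneser's addition theorem, which the paper does not prove at all: it is imported verbatim from \cite{Kne53} (see also Nathanson's or Tao--Vu's expositions) as a classical tool, so there is no internal proof to compare yours against. Judged on its own terms, your attempt performs the standard preliminary reductions correctly: the second inequality is indeed trivial from $A\subseteq A+H$, the ``moreover'' clause does follow from the first inequality with $H=\{0\}$, the reduction of the strong form to the weak form (K) via $X=A+H$, $Y=B+H$ and $H((A+H)+(B+H))=H(A+B)$ is right, and your Dyson-transform bookkeeping ($Y(e)\neq\varnothing$, $Y(e)\subsetneq Y$, $X(e)+Y(e)\subseteq X+Y$, $|X(e)|+|Y(e)|=|X|+|Y|$) is accurate.

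However, there is a genuine gap, and you name it yourself: everything above only shows that in a minimal counterexample every admissible transform strictly shrinks the sumset and strictly enlarges its stabilizer, and you do not derive a contradiction from this. That is not a small finishing touch --- it is the entire content of Kneser's theorem. A single descent on $|Y|$ with the $e$-transform is known to be insufficient (it proves the Cauchy--Davenport-type bound only when one can force an aperiodic sumset); all complete proofs add a second induction, typically on $|G|$ or on $|X+Y|$, pass to the quotient $G/H'$ by the enlarged stabilizer, apply the induction hypothesis there, and then lift the resulting inequality back to $G$ with careful counting of the $H'$-cosets met by $X$ and $Y$ --- the ``intricate case analysis'' you allude to but do not carry out. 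As written, the argument establishes reductions and a structural dichotomy but not the theorem; in the context of this paper the appropriate move is what the author does, namely to cite \cite{Kne53}, since filling your gap amounts to reproving Kneser's theorem in full.
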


The following result is a consequence of more general theorems proved in
\cite{Kemp} by Kemperman, see also \cite{Gryn} for a thorough study of the
so called Kemperman Structure Theorem.

\begin{theorem}[\protect\cite{Kemp} Kemperman]
Let $G$ be an abelian finite group of order $n$ and $A,B$ subsets of
$G$ such that $\left\vert A\right\vert \geq 2,$ $\left\vert
B\right\vert \geq 2$ and $\left\vert A+B\right\vert =\left\vert
A\right\vert +\left\vert B\right\vert -1.$ Then either $A+B$ is an
arithmetic progression or $A+B$ is quasi-periodic with period $H$, a
nontrivial subgroup of $G.$ \label{Kemperman2}
\end{theorem}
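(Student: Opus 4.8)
The statement is the critical case $|A+B|=|A|+|B|-1$ of the Kemperman Structure Theorem, so the plan is to invoke \cite{Kemp,Gryn} for it while indicating the shape of the argument. It proceeds by induction, with Kneser's theorem (Theorem~\ref{Kneser}) as the engine and the Dyson $e$-transform as the reduction step; throughout one may translate so that $0\in A$ and $0\in B$, which affects neither the hypothesis nor either conclusion, since translates of arithmetic progressions are arithmetic progressions and both $H(A+B)$ and quasi-periodicity are translation-invariant.

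First I would make the one immediate reduction, on the period $H:=H(A+B)$. If $H\neq\{0\}$ then $A+B$ is periodic, i.e.\ a union of $H$-cosets, so taking $C'=A+B$ and $C''=\varnothing$ exhibits $A+B$ as quasi-periodic with the nontrivial period $H$, and there is nothing left to prove. Hence everything reduces to the case $H(A+B)=\{0\}$, where one must still show that $A+B$ is an arithmetic progression or is quasi-periodic with some nontrivial period; note that a quasi-periodic set whose non-periodic part is nonempty and properly contained in a coset need not itself be periodic, so the two alternatives in the conclusion are genuinely finer than "periodic versus aperiodic".

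For this remaining case I would use the Dyson transform: for $e\in G$ with $A\cap(B+e)\neq\varnothing$, put $A_e=A\cup(B+e)$ and $B_e=(A-e)\cap B$; one checks directly that $A_e+B_e\subseteq A+B$ and $|A_e|+|B_e|=|A|+|B|$, whence $|A_e+B_e|\leq|A_e|+|B_e|-1$. By Kneser's theorem either equality holds — and, if some $e$ makes the new pair strictly smaller (say $\min(|A_e|,|B_e|)<\min(|A|,|B|)$), one recurses — or $|A_e+B_e|\leq|A_e|+|B_e|-2$, forcing $A_e+B_e$ to be periodic and returning one to the first branch; when no transform shrinks the pair, $A$ and $B$ are already in one of a small list of special shapes, each of which is directly an arithmetic progression or quasi-periodic. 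The base case $\min(|A|,|B|)=2$ is the near-invariance situation $|B\cap(B+d)|=|B|-1$ (with $A=\{0,d\}$ after translation), which pins $B$, and hence $A+B=B\cup(B+d)$, to the required arithmetic-progression/coset shape; this structure is then transported back up the chain of transforms.

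The hard part will be exactly this induction in the aperiodic-sumset case: the Dyson transform can create periodicity (caught by Kneser) and can shrink a set to a singleton — and when $\min(|A|,|B|)=1$ the desired conclusion can genuinely fail (e.g.\ a three-element non-progression in $\mathbb{Z}_p$) — so one must choose transforms that keep both parts of size at least $2$ while tracking which of $A_e$, $B_e$ inherits the eventual progression structure, and rule out the degenerate intermediate configurations. Organizing this bookkeeping uniformly over all finite abelian groups is precisely what the Kemperman Structure Theorem does, which is why we cite \cite{Kemp,Gryn} rather than reproduce it.
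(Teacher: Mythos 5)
The paper gives no proof of this theorem: it is imported as a known consequence of Kemperman's results in \cite{Kemp} (with \cite{Gryn} as a further reference), and your proposal takes essentially the same route, deferring the structural core to those same sources. The extra material you supply — the reduction of the periodic case to the trivial quasi-periodic decomposition $C'=A+B$, $C''=\varnothing$, and the standard Dyson-transform/Kneser outline with the correct identities $A_e+B_e\subseteq A+B$ and $\left\vert A_e\right\vert+\left\vert B_e\right\vert=\left\vert A\right\vert+\left\vert B\right\vert$ — is a sound sketch, consistent with treating the theorem as an external citation just as the paper does.
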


\section{Compositions of circulant tournaments and quasi-periodic symbol sets%
}

Let $m$ be a positive integer. Observe that $\gcd (m,2m+1)=1$ and as a
consequence, if $J$ is the set symbol of a circulant tournament, then $J$ is
not periodic. Therefore, $J$ is aperiodic or quasi-periodic. Using Theorems %
\ref{Kneser} and \ref{Kemperman2} and the definition of the symbol set $J$
(recall that $0\notin J$ for the second inequality), we have that%
\begin{equation*}
2m-1=2\left\vert J\right\vert -1\leq \left\vert J+J\right\vert \leq
2\left\vert J\right\vert =2m.
\end{equation*}

The following theorem gives a characterization of compositions of circulant
tournaments in terms of the quasi-periodicity of its symbol set.

\begin{theorem}
Let $\overrightarrow{C}_{2p+1}(L)$ be a circulant tournament. $%
\overrightarrow{C}_{2p+1}(L)$ is a composition of circulant tournaments if
and only if $L$ is quasi-periodic. \label{comp-quasi}
\end{theorem}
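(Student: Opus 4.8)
The plan is to prove both implications by exploiting Proposition~\ref{comp}, which tells us exactly what the symbol set of a composition of circulant tournaments looks like. For the ``only if'' direction, suppose $\overrightarrow{C}_{2p+1}(L) \cong \overrightarrow{C}_{2m+1}(J)[\overrightarrow{C}_{2n+1}(K)]$ with $2p+1 = (2m+1)(2n+1)$ and $m,n \geq 1$. By Proposition~\ref{comp}, $L = (2m+1)K \cup (J + (2m+1)[1,2n+1])$. I would set $H = (2m+1)\mathbb{Z}_{2p+1}$, the unique subgroup of $\mathbb{Z}_{2p+1}$ of order $2n+1$, which is nontrivial since $n \geq 1$. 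The second piece $J + (2m+1)[1,2n+1]$ is a union of $H$-cosets: indeed $(2m+1)[1,2n+1]$ runs over all of $H$ (as $j$ ranges over $[1,2n+1]$, $(2m+1)j$ ranges over all multiples of $2m+1$), so $J + (2m+1)[1,2n+1] = \bigcup_{j \in J}(j + H)$ is periodic with $H \subseteq H(\,\cdot\,)$. The first piece $(2m+1)K \subseteq H$ is contained in a single $H$-coset (namely $H$ itself), and it is a proper subset of $H$ since $|K| = n < 2n+1 = |H|$. These two pieces are disjoint: an element of $(2m+1)K$ lies in $H$, while an element $j + (2m+1)i$ with $j \in J \subseteq \mathbb{Z}_{2m+1}\setminus\{0\}$ reduces to $j \neq 0 \pmod{2m+1}$, hence is not in $H$. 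So $L$ is quasi-periodic with period (containing) $H$.

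For the ``if'' direction, suppose $L$ is quasi-periodic, so there is a nontrivial subgroup $H$ of $\mathbb{Z}_{2p+1}$ with $L = L' \cup L''$ disjoint, $L'$ periodic (a union of $H$-cosets, possibly empty) and $L'' \subsetneq c + H$ for some $c$. Since $2p+1$ is odd, $H$ has odd order $2n+1$ for some $n \geq 1$, and $H = (2m+1)\mathbb{Z}_{2p+1}$ where $2p+1 = (2m+1)(2n+1)$, so $m \geq 1$. The goal is to reconstruct $K$ (a symbol set in $\mathbb{Z}_{2n+1}$) and $J$ (a symbol set in $\mathbb{Z}_{2m+1}$) so that $L$ is, up to isomorphism, $(2m+1)K \cup (J + (2m+1)[1,2n+1])$. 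I would let $\bar L$ denote the image of $L$ under the quotient map $\mathbb{Z}_{2p+1} \to \mathbb{Z}_{2p+1}/H \cong \mathbb{Z}_{2m+1}$; the periodic part $L'$ contributes full fibers and the small part $L''$ contributes a subset of a single fiber. The key point is that the defining condition $|\{j,-j\}\cap L| = 1$ for all $j$, i.e. $L \sqcup (-L) = \mathbb{Z}_{2p+1}\setminus\{0\}$, forces: (a) the fiber over $\bar 0$ must itself be a symbol set of a circulant tournament on $\mathbb{Z}_{2n+1}$ (this fiber is $\{x \in L : (2m+1) \mid x\}$, which must pair up correctly under negation inside $H$), so it cannot be a union of cosets and must be the ``small'' part — that is, $L'' = (2m+1)K$ after identifying $H \cong \mathbb{Z}_{2n+1}$; and (b) for each nonzero $\bar\jmath \in \mathbb{Z}_{2m+1}$, exactly one of the fibers over $\bar\jmath$ and $-\bar\jmath$ is entirely contained in $L$ and the other is disjoint from $L$, which lets us define $J = \{\bar\jmath : \text{fiber over } \bar\jmath \subseteq L\} \subseteq \mathbb{Z}_{2m+1}\setminus\{0\}$ with $|J| = m$ and $|\{\bar\jmath,-\bar\jmath\}\cap J| = 1$. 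Then $L = (2m+1)K \cup (J + H) = (2m+1)K \cup (J + (2m+1)[1,2n+1])$, and Proposition~\ref{comp} identifies $\overrightarrow{C}_{2p+1}(L)$ with $\overrightarrow{C}_{2m+1}(J)[\overrightarrow{C}_{2n+1}(K)]$.

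I expect the main obstacle to be the verification in the ``if'' direction that the quasi-periodic structure is \emph{compatible} with the tournament symbol condition — namely, that $L'$ really is a union of whole fibers over nonzero residues (not, say, a union of $H$-cosets that happens to include the coset $H$ itself) and that $L''$ is forced to sit over $\bar 0$. A priori, quasi-periodicity only says $L''$ lies in \emph{some} coset $c+H$; one must argue that $c \in H$. This should follow from the antisymmetry $L \cap (-L) = \emptyset$: if $L''$ sat over a nonzero residue $\bar c$, then $L'$ would have to contain the entire fiber over $-\bar c$ (to cover $\mathbb{Z}_{2p+1}\setminus\{0\}$ with $L \sqcup -L$), but then both fibers over $\bar c$ and $-\bar c$ would interact with $L$ in a way that either double-covers or leaves a gap, unless $L''$ is in fact the whole fiber — contradicting $L'' \subsetneq c+H$ — or $\bar c = -\bar c$, i.e. $\bar c = \bar 0$. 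Pinning down this case analysis carefully, using that $2m+1$ is odd so $\bar 0$ is the unique element fixed by negation in $\mathbb{Z}_{2m+1}$, is where the real work lies; everything else is bookkeeping with Proposition~\ref{comp} and the quotient map.
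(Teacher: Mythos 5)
Your proposal is correct and takes essentially the same route as the paper: both directions hinge on Proposition~\ref{comp}, with the subgroup $H$ of order $2n+1$, the periodic part $J+H$, the small part $(2m+1)K\subset H$, and the antisymmetry $L\cap(-L)=\varnothing$ (together with $0\notin L$) used to force the non-periodic piece into the zero coset. The only cosmetic difference is that you read off $\left\vert J\right\vert =m$ and $\left\vert K\right\vert =n$ directly from the negation-pairing of the fibers, whereas the paper gets them from the short count $s(2n+1)+t=p=m(2n+1)+n$.
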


\begin{proof}
Suppose that $\overrightarrow{C}_{2p+1}(L)$ is isomorphic to $%
\overrightarrow{C}_{2m+1}(J)\left[ \overrightarrow{C}_{2n+1}(K)\right] $.
According to Proposition \ref{comp},
\begin{equation*}
L=(2m+1)K\cup (J+(2m+1)[1,2n+1])
\end{equation*}
and $2p+1=(2m+1)(2n+1).$ First, notice that
\begin{eqnarray*}
H &=&(2m+1)[1,2n+1]=\{(2m+1)i\hspace{4pt}\textrm{mod}\left(
2p+1\right) :i=0,1,\ldots ,2n\}
\\
&=&\{0,2m+1,4m+2,\ldots ,2n(2m+1)\}
\end{eqnarray*}%
is a subgroup of $\mathbb{Z}_{2p+1}$ of order $2n+1$ and every nontrivial
subgroup of the cyclic group $\mathbb{Z}_{2p+1}$ is of this form if $2n+1$
is a divisor of $2p+1.$
We have that $J\cap H=\varnothing$, since $J\subseteq \mathbb{Z}_{2m+1}\setminus \{0\}$. Therefore, $J+H$ is a union of $H$-cosets not
containing the subgroup $H$ $(0\notin J).$ Similarly, $K\subseteq \mathbb{Z}%
_{2n+1}\setminus \{0\}$ and consequently $(2m+1)K\subset H$ (recall that all
products are taken modulo $2p+1).$ We have that $(2m+1)K\cap
(J+H)=\varnothing .$ Let $L=C^{\prime }\cup C^{\prime \prime },$ where $%
C^{\prime }=J+H$ and $C^{\prime \prime }=(2m+1)K.$ We conclude that $L$ is
quasi-periodic.

Conversely, suppose that the symbol set $L$ of cardinality $p$ is
quasi-periodic. By the definition of a quasi-periodic set, there exists $H,$ a nontrivial subgroup of $%
\mathbb{Z}_{2p+1},$ such that $L=C^{\prime }\cup C^{\prime \prime }$ and $%
C^{\prime }\cap C^{\prime \prime }=\varnothing ,$ where $C^{\prime
}+H=C^{\prime }$ and $C^{\prime \prime }\subset c+H$ with $c\in C^{\prime
\prime }$. Observe that $H\cap C^{\prime
}=\varnothing $ because $0\notin L.$ Then
\begin{equation*}
C^{\prime }=\{j_{i}+H:0\neq j_{i}\in G\}.
\end{equation*}%
Let $J=\{j_{1},j_{2},...,j_{s}\}$, where $s$ is the number of cosets contained in $%
C^{\prime}$. Therefore $C^{\prime }=J+H$. Observe that if $j_{k}\in J$ where $%
k\in \lbrack 1,s],$ then $-j_{k}\notin J$ (otherwise, $j_{k}\in L$ and $%
-j_{k}\in L,$ a contradiction). Moreover, $j_{k}+H\subseteq L$ if and only
if $-j_{k}+H\nsubseteq L$ by the definition of the symbol set $L$.
Observe that in the quotient group $\mathbb{Z}_{2p+1}/H$, we have that $-($
$j_{k}+H)=$ $-j_{k}+H.$ Consequently, $C^{\prime \prime }\subset c+H=H.$ Let
\begin{equation*}
C^{\prime \prime }=\{(2m+1)k_{\alpha }:k_{\alpha }\in \lbrack 1,2n+1]\}.
\end{equation*}%
We define $K=\{k_{1},k_{2},...,k_{t}\}.$ Once again, if $k_{\alpha }\in K,$
then $-k_{\alpha }\notin K$ for every $\alpha \in \lbrack 1,t].$ Observe
also that $\left\vert C^{\prime \prime }\right\vert =t.$ Therefore, $%
s(2n+1)+t=p.$ Since $p=m(2n+1)+n$, where $m,$ $n,$ $s$ and $t$ are positive
integers, and $2n+1>\left\vert n-t\right\vert $ (here $\left\vert
x\right\vert $ stands for the absolute value of the integer $x)$, we have
that
\begin{equation*}
s(2n+1)+t=m(2n+1)+n \Leftrightarrow
(s-m)(2n+1)=n-t\Leftrightarrow s=m\text{ and }n=t.
\end{equation*}
Hence $L=(2m+1)K\cup (J+H).$ By Proposition \ref{comp}, we conclude that
$\overrightarrow{C}_{2p+1}(L)$ is isomorphic to $\overrightarrow{C}_{2m+1}(J)%
\left[ \overrightarrow{C}_{2n+1}(K)\right] $.
\end{proof}

Using Theorem \ref{Kemperman2}, we have the following consequences.

\begin{corollary}
Let $\overrightarrow{C}_{2m+1}(J)$ be a circulant tournament such that $J$
is not an arithmetic progression. Then $\left\vert J+J\right\vert
=2\left\vert J\right\vert -1$ if and only if $J$ is quasi-periodic.
Moreover, if $J$ is an arithmetic progression (resp. quasi-periodic), then $%
J+J$ is an arithmetic progression (resp. quasi-periodic).
\end{corollary}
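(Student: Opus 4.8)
The plan is to read everything off Kneser's and Kemperman's structure theorems (Theorems~\ref{Kneser} and~\ref{Kemperman2}) together with the composition characterization just proved (Theorem~\ref{comp-quasi}), using throughout that $m=|J|$ satisfies $2m-1\le|J+J|\le 2m$, so that $|J+J|$ is always $2m-1$ or $2m$. The two ``moreover'' assertions are settled first and directly. If $J=\{a,a+d,\dots ,a+(m-1)d\}$ is an arithmetic progression, then $\gcd(d,2m+1)=1$ (otherwise $d$ has order $<2m+1$ in $\mathbb Z_{2m+1}$ and $|J|<m$), so $J+J=\{2a+id:0\le i\le 2m-2\}$ is an arithmetic progression of common difference $d$ with exactly $2m-1$ elements. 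If $J$ is quasi-periodic, write $J=C'\cup C''$ with $C'$ a union of cosets of a proper nontrivial subgroup $H$ and $C''\subsetneq c+H$; then $J+J=(C'+C')\cup(C'+C'')\cup(C''+C'')$, where $C'+C'$ and $C'+C''$ are unions of $H$-cosets and $C''+C''\subseteq 2c+H$ lies inside a single coset, so a short case split (is the coset $2c+H$ already covered by $(C'+C')\cup(C'+C'')$ or not?) shows $J+J$ to be a union of $H$-cosets plus a subset of one further coset, hence quasi-periodic.

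For the equivalence I assume $J$ is not an arithmetic progression, so that $m\ge 3$ and Theorem~\ref{Kemperman2} applies to the pair $(J,J)$ whenever $|J+J|=2m-1$. Suppose $|J+J|=2m-1$; then by Theorem~\ref{Kemperman2} the set $J+J$ is an arithmetic progression or quasi-periodic. I would rule out the first possibility via the auxiliary fact that a sumset $J+J$ that is an arithmetic progression of exactly $2m-1$ terms forces $J$ itself to be an arithmetic progression (contradicting the hypothesis): the common difference of $J+J$ is then a unit of $\mathbb Z_{2m+1}$ (there is no wrap-around since $2m-1<2m+1$), so after an affine substitution $J+J$ is an interval; as $2j=0$ implies $j=0$ in a group of odd order we get $0\in J$ and hence $J\subseteq J+J$; and the two residues missing from $J+J$ --- namely $2m-1$ and $2m\equiv-1$ --- pin $J$ down to an interval by the standard count that, for each value forbidden in $J+J$, at most one element of each complementary pair of residues lies in $J$. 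It remains to transfer the quasi-periodicity of $J+J$ back to $J$: Kneser's theorem controls $H(J+J)$, and combined with the extremality $|J+J|=2m-1$ and the identity $\overline{J}=-J\cup\{0\}$ this yields that $J$ is quasi-periodic, i.e., by Theorem~\ref{comp-quasi}, that $\overrightarrow{C}_{2m+1}(J)$ is a composition of circulant tournaments.

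Conversely, if $J$ is quasi-periodic then Theorem~\ref{comp-quasi} and Proposition~\ref{comp} give $\overrightarrow{C}_{2m+1}(J)\cong\overrightarrow{C}_{2r+1}(P)[\overrightarrow{C}_{2s+1}(Q)]$ with $J=(2r+1)Q\cup(P+H)$, where $H$ is the subgroup of order $2s+1$ in $\mathbb Z_{2m+1}$. Expanding $J+J$ coset by coset --- using $0\notin P+P$, so that the sumset of $P$ omits the coset $H$, and the elementary remark that in a regular tournament every in-neighbour of a vertex is dominated by some out-neighbour of it, whence $P\cup(P+P)=\mathbb Z_{2r+1}\setminus\{0\}$ --- reduces the whole count to that of $|Q+Q|$; choosing the inner factor $\overrightarrow{C}_{2s+1}(Q)$ to be simple and evaluating this base case then gives $|J+J|=2m-1$, which, with the ``moreover'' part, also re-proves that $J+J$ is quasi-periodic.

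The step I expect to be the main obstacle is this transfer from the structure of $J+J$ --- which is all Theorem~\ref{Kemperman2} directly supplies --- to the structure of $J$ itself: both the ``arithmetic progression'' transfer and the ``quasi-periodic'' transfer seem to demand re-running the Kneser/Kemperman analysis on $J$, using the extremal identity $|J+J|=2|J|-1$ and the symbol-set relation $\overline{J}=-J\cup\{0\}$ to remove the ambiguity inherent in those structure theorems; getting the coset bookkeeping of the converse exactly right, in particular the base value $|Q+Q|$, is the other delicate point.
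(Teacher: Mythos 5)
The paper gives no argument for this corollary --- it is presented as an immediate consequence of Theorem \ref{Kemperman2} --- so your proposal has to stand on its own, and both nontrivial directions have genuine gaps. The decisive failure is in your converse. Your coset count is correct: writing $J=(2r+1)Q\cup(P+H)$ with $\left\vert H\right\vert =2s+1$, the king property gives $\bar{P}\cup (\bar{P}+\bar{P})=\mathbb{Z}_{2r+1}\setminus \{0\}$ and $(J+J)\cap H=(2r+1)(Q+Q)$, whence $\left\vert J+J\right\vert =(2m+1)-(2s+1)+\left\vert Q+Q\right\vert$. But your ``base case'' is false: a simple inner factor need not have $\left\vert Q+Q\right\vert =2s-1$; simplicity typically forces $\left\vert Q+Q\right\vert =2s$ (this is exactly what the paper itself uses later, in the proof of Theorem \ref{circ-tight}). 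Concretely, $Q=\{1,2,4\}\subset \mathbb{Z}_{7}$ is the symbol set of a simple tournament with $Q+Q=\mathbb{Z}_{7}\setminus \{0\}$, and $\overrightarrow{C}_{3}(\{1\})[\overrightarrow{C}_{7}(\{1,2,4\})]\cong \overrightarrow{C}_{21}(J)$ with $J=\{1,3,4,6,7,10,12,13,16,19\}$: here $J$ is quasi-periodic (take $C'=1+H$, $C''=\{3,6,12\}\subset H=\langle 3\rangle$) and not an arithmetic progression, yet $J+J=\mathbb{Z}_{21}\setminus \{0\}$, so $\left\vert J+J\right\vert =2\left\vert J\right\vert =20$. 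Since $\left\vert J+J\right\vert$ is determined by $J$, you cannot ``choose'' the inner factor to make the count come out; this example shows the implication you are proving (quasi-periodic $\Rightarrow \left\vert J+J\right\vert =2\left\vert J\right\vert -1$) fails as stated, and only the ``only if'' direction is what the paper actually uses afterwards.

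In the forward direction the essential step is missing, as you yourself flag: Theorem \ref{Kemperman2} only describes $J+J$, and the transfer from ``$J+J$ quasi-periodic'' to ``$J$ quasi-periodic'' is precisely the content needed. ``Kneser controls $H(J+J)$'' gives nothing here, because $H(J+J)$ is trivial for every symbol set (if $H(J+J)\neq \{0\}$, Kneser forces $J+H=J$, impossible since $\gcd (m,2m+1)=1$), so it cannot separate the quasi-periodic from the aperiodic alternative; an honest transfer requires the full Kemperman structure theorem for the pair $(J,J)$ (as in Grynkiewicz), not the sumset-only statement quoted in the paper. Your exclusion of the arithmetic-progression alternative is also mis-normalized: translations do not preserve symbol sets, so you may not arrange $0\in J$; the legitimate reduction is to multiply by the inverse of the common difference (a unit, as you argue), observe that $0\notin J+J$ forces the two residues missing from the length-$(2m-1)$ progression to be $\{0,1\}$ or $\{0,-1\}$, and then your complementary-pair chain argument does force $J=[1,m]$ or $J=-[1,m]$, an arithmetic progression --- that sub-step is repairable, and your ``moreover'' part is fine.
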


\begin{corollary}
Let $\overrightarrow{C}_{2m+1}(J)$ be a simple circulant tournament. Then $J$
and $J+J$ are aperiodic. \label{J-aperiodic}
\end{corollary}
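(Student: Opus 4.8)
The plan is to deduce both assertions from two facts already available: the estimate $2m-1\le\left\vert J+J\right\vert\le 2m$ recorded at the beginning of this section, and the coprimality relations $\gcd(m,2m+1)=\gcd(2m-1,2m+1)=\gcd(2m,2m+1)=1$ (the middle one holds because $(2m+1)-(2m-1)=2$ and $2m-1$ is odd; the others are immediate). The hypothesis that $\overrightarrow{C}_{2m+1}(J)$ is simple will be used — through Theorem \ref{comp-quasi} — only to guarantee that $J$ is not quasi-periodic, hence not periodic; but as the argument below shows, even less is needed, since the statement is at bottom a divisibility computation.

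First I would isolate the elementary principle: if $\varnothing\neq C\subseteq\mathbb{Z}_{2m+1}$ is periodic, then $C$ is a union of cosets of $H(C)$, so $\left\vert H(C)\right\vert$ divides $\left\vert C\right\vert$, while by Lagrange $\left\vert H(C)\right\vert$ divides $2m+1$; hence $\left\vert H(C)\right\vert$ divides $\gcd(\left\vert C\right\vert,2m+1)$. Taking $C=J$ (of cardinality $m$) this forces $\left\vert H(J)\right\vert$ to divide $1$, so $H(J)=\{0\}$ and $J$ is aperiodic — the already observed fact that a circulant symbol set is never periodic, which one may equally read off from Theorem \ref{comp-quasi} using simplicity. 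Taking $C=J+J$, whose cardinality lies in $\{2m-1,2m\}$ and is therefore coprime to $2m+1$ by the first step, the same principle forces $H(J+J)=\{0\}$, i.e. $J+J$ is aperiodic.

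The one genuinely nontrivial input is the lower bound $\left\vert J+J\right\vert\ge 2m-1$, which I expect to be the main obstacle and which, as indicated at the start of the section, rests on Kneser's theorem (Theorem \ref{Kneser}). If $\left\vert J+J\right\vert\le 2m-2=2\left\vert J\right\vert-2$, then $J+J$ is periodic with some nontrivial period $H$; since $\left\vert H\right\vert$ divides the odd number $2m+1$ we have $\left\vert H\right\vert\ge 3$, and because $J$ is a symbol set it contains exactly one point of each antipodal pair $\{h,-h\}$ with $h\in H\setminus\{0\}$, so we may pick $h\in J\cap H$. Then $2h=h+h\in J+J$, with $2h\in H$ and $2h\neq 0$ (the group has odd order), so $J+J$, being a union of $H$-cosets, contains the whole coset $H$ and in particular $0$ — contradicting $0\notin J+J$. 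With this lemma in place the corollary follows from the displayed divisibility chain; alternatively one could run the case split "$J$ an arithmetic progression" versus "$J$ not an arithmetic progression" and invoke the preceding corollary together with simplicity, but the computation above seems the most economical.
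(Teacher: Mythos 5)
Your proof is correct, and it takes a genuinely different (and more elementary) route than the paper. The paper obtains this corollary as a byproduct of Theorem \ref{comp-quasi} together with Kemperman's theorem (Theorem \ref{Kemperman2}) and the corollary preceding it: simplicity shows via Theorem \ref{comp-quasi} that $J$ is not quasi-periodic, and the conclusions about $J$ and $J+J$ are then read off from the structure theory of small sumsets. You instead reduce everything to a Lagrange/divisibility observation --- a nonempty periodic $C\subseteq \mathbb{Z}_{2m+1}$ has $\left\vert H(C)\right\vert$ dividing $\gcd\left(\left\vert C\right\vert ,2m+1\right)$ --- combined with $\left\vert J\right\vert =m$ and $\left\vert J+J\right\vert \in \{2m-1,2m\}$, and your justification of the lower bound $\left\vert J+J\right\vert \geq 2m-1$ (Kneser's Theorem \ref{Kneser} plus the antipodal-pair argument producing $h\in J\cap H$, hence $2h\in (J+J)\cap H$ and $0\in H\subseteq J+J$, contradicting $0\notin J+J$) is sound; indeed that argument by itself already gives $H(J+J)=\{0\}$ with no cardinality count at all. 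What your approach buys is a self-contained proof showing that, under the paper's literal definition of aperiodic ($H(C)=\{0\}$), the simplicity hypothesis is not needed: every symbol set $J$ and every sumset $J+J$ is aperiodic. The one caveat worth flagging is that this very observation shows the statement, read literally, is weaker than the way it is later invoked: in Proposition \ref{w3-keen} and Theorem \ref{circ-tight} the corollary is used to conclude $\left\vert J+J\right\vert =2n$, which does not follow from $H(J+J)=\{0\}$ alone (for the cyclic symbol set $J=[1,n]$ one has $\left\vert J+J\right\vert =2n-1$ with $J+J$ aperiodic), but only from the stronger consequence of simplicity delivered by Theorem \ref{comp-quasi} and Kemperman, namely that $J$ is neither quasi-periodic nor an arithmetic progression. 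So your proof is valid for the corollary as stated, while the paper's route through quasi-periodicity is the one that carries the extra information actually used downstream.
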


\section{Simple circulant tournaments are $\protect\overrightarrow{\protect%
\omega }_{3}$-keen and tight}

Let $\overrightarrow{C}_{2n+1}(J)$ be a circulant tournament. We use the
following results proved in \cite{Ll-NL} for circulant tournaments of prime
order. The following first lemma remains valid for circulant tournaments of
any odd order (see Remark 4 of the already mentioned paper). The second one
is also true for general circulant tournaments since the proof only uses the
additive properties of the ring $\mathbb{Z}_{2n+1}.$

\begin{lemma}[\protect\cite{Ll-NL}, Lemma 1]
Let $\pi =A\mid B\mid C$ be an externally $\overrightarrow{C}_{3}$-free
partition of $\overrightarrow{C}_{2n+1}(J).$ Then \label{ABCJ}
\begin{equation*}
\left( \left( \left( \left( \left( A+J\right) \cap B\right) +J\right) \cap
C\right) +J\right) \cap A=\varnothing .
\end{equation*}
\end{lemma}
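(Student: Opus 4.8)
The plan is to argue by contradiction: suppose the intersection is nonempty, and extract from it a directed triangle all of whose arcs are external, contradicting the hypothesis that $\pi = A\mid B\mid C$ is an externally $\overrightarrow{C}_{3}$-free partition. Concretely, if the set is nonempty then there is an element $a\in A$ together with witnesses $b\in B$, $c\in C$ such that $b-a\in J$, $c-b\in J$, and (closing the loop) $a-c\in J$. First I would unwind the nested intersections carefully: $a\in (((((A+J)\cap B)+J)\cap C)+J)\cap A$ means $a\in A$ and $a\in (((A+J)\cap B)+J)\cap C)+J$, so $a = c + j_3$ for some $j_3\in J$ and some $c\in (((A+J)\cap B)+J)\cap C$; in particular $c\in C$ and $c = b + j_2$ for some $j_2\in J$ and $b\in (A+J)\cap B$; and $b\in B$ with $b = a' + j_1$ for some $j_1\in J$, $a'\in A$. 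Chaining these gives $a = a' + j_1 + j_2 + j_3$, but a priori $a'$ need not equal $a$.

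The key observation that repairs this is vertex-transitivity of $\overrightarrow{C}_{2n+1}(J)$, exactly the "highly symmetric" property emphasized in Section 2, which the paper notes is "constantly used in proofs." Since $\overrightarrow{C}_{2n+1}(J)$ is vertex-transitive, the property of admitting an externally $\overrightarrow{C}_3$-free partition is preserved under translation, so for any $g\in\mathbb{Z}_{2n+1}$ the translated partition $(A+g)\mid(B+g)\mid(C+g)$ is again externally $\overrightarrow{C}_3$-free; equivalently, one may normalize and assume $a'=0=a$. Alternatively — and this is cleaner — one argues directly that $a = a' + (j_1+j_2+j_3)$ forces $j_1 + j_2 + j_3 \equiv a - a' \pmod{2n+1}$, and then uses the fact that in a circulant tournament the vertices $a', a'+j_1, a'+j_1+j_2, a'+j_1+j_2+j_3 = a$ form a directed walk; if $a = a'$ this is a closed directed walk of length $3$, i.e. a directed triangle $a' \to b \to c \to a'$, whose three arcs $(a',b)$, $(b,c)$, $(c,a')$ join $A$ to $B$, $B$ to $C$, $C$ to $A$ respectively, hence are all external — contradicting the $\overrightarrow{C}_3$-free hypothesis. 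So the real content is reducing to the case $a = a'$, which is precisely where translation-invariance is invoked: replace $a'$ by $0$ by translating the whole partition, relabel, and rerun the argument so that the element produced in $A$ coincides with the starting element.

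I expect the main obstacle to be this bookkeeping step — making rigorous that one may assume the "$A$ at the end" and the "$A$ at the beginning" refer to the same vertex. The honest way is: given any element in the quadruple intersection, one gets $a, a'\in A$ with $a - a' \in J+J+J$. Now translate by $-a'$: then $0\in A-a'$, and the element $a - a'$ lies in the analogously-defined nested intersection for the translated partition, and $a-a'\in (A-a')$ as well. Iterating (translating so that the newly-produced $A$-element sits at $0$) and using finiteness of $\mathbb{Z}_{2n+1}$, one finds a genuine fixed point, i.e. a vertex $x\in A$ with $x = x + (j_1+j_2+j_3)$, forcing $j_1+j_2+j_3 = 0$ and yielding the external $\overrightarrow{C}_3$. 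A slicker alternative, which I would actually write up, avoids iteration: directly show that nonemptiness of the quadruple intersection is equivalent, via the translation-invariance of the partition class structure, to the existence of $j_1,j_2,j_3\in J$ with $j_1+j_2+j_3=0$ realizing a triangle with one vertex in each of $A,B,C$ — and the latter is forbidden. Everything else (unwinding the $\cap$'s, checking each of the three arcs is external because its endpoints lie in distinct classes) is routine.
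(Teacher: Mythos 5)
Your proposal correctly unwinds the nested sets: an element of $(((((A+J)\cap B)+J)\cap C)+J)\cap A$ yields vertices $a'\in A$, $b\in B$, $c\in C$, $a\in A$ with $b-a',\,c-b,\,a-c\in J$, i.e.\ an external directed path $a'\to b\to c\to a$, and you rightly flag that $a$ need not equal $a'$, so this is not yet a forbidden triangle. The problem is that neither of your proposed repairs closes this gap. Vertex-transitivity only relabels: translating the partition by $-a'$ produces the configuration $0\to b-a'\to c-a'\to a-a'$ inside the translated (still externally $\overrightarrow{C}_3$-free) partition, and cannot identify the two $A$-endpoints; ``normalize and assume $a'=0=a$'' is precisely the assertion that needs proof. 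The iteration/``fixed point by finiteness'' sketch has no mechanism either: after each translation you merely obtain another path of the same shape, and nothing ever forces $j_1+j_2+j_3=0$ for some choice of witnesses. Note also that the paper itself does not prove this lemma; it quotes it from [Ll-NL, Lemma 1] (proved there for prime order), so the proof you are trying to reconstruct cannot be the purely local triangle argument you describe.

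In fact the ``slicker alternative'' you say you would write up --- that nonemptiness of the quadruple intersection is equivalent to the existence of a rainbow external $\overrightarrow{C}_3$ --- is false as a statement about externally $\overrightarrow{C}_3$-free $3$-partitions of circulant tournaments, so no bookkeeping can rescue this route. Take $\overrightarrow{C}_{27}(L)$ with $L=\{1,3,4,7,9,10,12,13,16,19,21,22,25\}$ (this is $\overrightarrow{C}_3[\overrightarrow{C}_3[\overrightarrow{C}_3]]$), and $\pi=A\mid B\mid C$ with $B=\{9,18\}$, $C=\{3,6,12,15,21,24\}$ and $A$ the remaining vertices. Every element of $L$ is congruent to $0$ or $1 \pmod 3$, so each vertex not divisible by $3$ either dominates all multiples of $3$ or is dominated by all of them; hence no directed triangle with exactly one vertex outside $3\mathbb{Z}_{27}$ exists, and a short check shows the only directed triangle through $0$ and a vertex of $B$ is $\{0,9,18\}$, which is not rainbow. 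So $\pi$ is externally $\overrightarrow{C}_3$-free. Yet $8\to 9\to 12\to 13$ (differences $1,3,1\in L$) with $8,13\in A$, $9\in B$, $12\in C$ shows $13\in(((((A+L)\cap B)+L)\cap C)+L)\cap A\neq\varnothing$. This demonstrates that triangle-freeness plus vertex-transitivity alone cannot yield the stated set identity; any correct argument must use the arithmetic structure of $J$ (as the additive machinery elsewhere in the paper, and the prime-order hypothesis in [Ll-NL], does). Your proposal therefore has a genuine gap at its central step.
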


\begin{lemma}[\protect\cite{Ll-NL}, Lemma 2]
Let $A$, $C$ and $J$ be nonempty subsets of $\mathbb{Z}_{2n+1}$ such that $%
A\cap C=\varnothing $ and $\overline{J}=-J\cup \{0\}.$ Then \label{Aux}%
\begin{equation*}
\left( C\cap (A-J\right) )-J=(C-J)\cap (A-J-J).
\end{equation*}
\end{lemma}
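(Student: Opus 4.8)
\emph{Proof idea.} The plan is to treat the two inclusions separately. The inclusion $(C\cap(A-J))-J\subseteq(C-J)\cap(A-J-J)$ needs no hypotheses: if $x=y-j$ with $y\in C\cap(A-J)$ and $j\in J$, then $x\in C-J$ since $y\in C$, and $x=y-j\in(A-J)-J=A-J-J$ since $y\in A-J$. So all the content is in the reverse inclusion, and it is there that $A\cap C=\varnothing$ and $\overline J=-J\cup\{0\}$ must both be used.

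For the reverse inclusion, I would take $x\in(C-J)\cap(A-J-J)$, fix witnesses $j_1\in J$ with $x+j_1\in C$ and $j_2,j_3\in J$ with $x+j_2+j_3\in A$, and try to produce a single $j\in J$ with $x+j\in C\cap(A-J)$. Since $x+j_2$ and $x+j_3$ already lie in $A-J$ (witnessed by $j_3$ and $j_2$), I am done immediately if $x+j_2\in C$, or $x+j_3\in C$, or $x+j_1\in A-J$; so assume none of these holds. Now I would use that $\overline J=-J\cup\{0\}$ is exactly the statement $\mathbb{Z}_{2n+1}=\{0\}\cup J\cup(-J)$ (a disjoint union), applied to $d:=j_2+j_3-j_1$: if $d=0$ then $x+j_1=x+j_2+j_3\in A\cap C$, impossible; if $d\in J$ then $(x+j_1)+d=x+j_2+j_3\in A$, so $x+j_1\in A-J$, contrary to assumption. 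Hence $d\in-J$, i.e.\ $j_4:=j_1-j_2-j_3\in J$. Writing $c=x+j_1\in C$ and $a=x+j_2+j_3\in A$ this reads $c-a\in J$, and the assumption $x+j_1\notin A-J$ (that is, $A\cap(c+J)=\varnothing$) sharpens, using the same trichotomy around $c$, to $A\subseteq c-J$.

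The hard part will be exactly this residual case $d\in-J$: one must extract a contradiction from $c\in C$, $A\subseteq c-J$, $x+j_2\notin C$ and $x+j_3\notin C$, presumably after re-selecting the witnesses $j_1,j_2,j_3$, and this is the point where the additive structure of $\mathbb{Z}_{2n+1}$ beyond the trichotomy does the real work. First I would translate everything by $-c$ so that the relations become statements about $0$, $J$, $-J$ and the shifted copy of $A$ inside $-J$, and rewrite $x+j_2,x+j_3\notin C$ as containment statements for $C$ via repeated use of the equivalence~\eqref{SumInt}, seeking an incompatibility with $A\subseteq c-J$. If a direct set-chase does not close, the natural quantitative input is $|J+J|\ge 2|J|-1$ from Theorem~\ref{Kneser} (so $J+J$ misses at most one nonzero residue), which limits how much of $x+J$ can avoid $A-J$; as a fallback I would attempt an induction on $|A|+|C|$ that deletes the offending witness.
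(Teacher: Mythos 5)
Your setup is fine as far as it goes: the inclusion $(C\cap(A-J))-J\subseteq(C-J)\cap(A-J-J)$ is indeed immediate, the trichotomy on $d=j_2+j_3-j_1$ (using that $\mathbb{Z}_{2n+1}$ is the disjoint union $\{0\}\cup J\cup(-J)$) is a natural first move, the case $d=0$ is correctly excluded by $A\cap C=\varnothing$, the case $d\in J$ correctly finishes, and the sharpening of $x+j_1\notin A-J$ to $A\subseteq c-J$ is valid. But the proposal stops exactly where the proof would have to happen: the residual case $d\in-J$ is not resolved, only a list of things you would try (re-selecting witnesses, the bound $\left\vert J+J\right\vert\geq 2\left\vert J\right\vert-1$ from Theorem \ref{Kneser}, an induction on $\left\vert A\right\vert+\left\vert C\right\vert$). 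As it stands this is a proof sketch with the only nontrivial case missing, so it is not a proof.

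More importantly, that residual case cannot be closed from the hypotheses you are allowed to use, so none of the proposed fallbacks can succeed. Take $n=2$, $J=\{1,2\}\subset\mathbb{Z}_5$ (so $\overline{J}=\{0,3,4\}=-J\cup\{0\}$), $A=\{4\}$, $C=\{1\}$: these are nonempty and disjoint. Then $A-J=\{2,3\}$, hence $C\cap(A-J)=\varnothing$ and the left-hand side is empty, while $0=1-1=4-2-2$ gives $0\in(C-J)\cap(A-J-J)$. This is precisely your residual configuration ($x=0$, $c=1$, $a=4$, $j_1=1$, $j_2=j_3=2$, $d=3\in-J$, $A\subseteq c-J=\{0,4\}$, $x+j_2=2\notin C$), and there is no contradiction available: the reverse inclusion simply fails for these sets. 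Note also that $\left\vert J+J\right\vert=3=2\left\vert J\right\vert-1$ here, so Kneser's bound gives nothing, and $A$, $C$ are already singletons, so the induction has no smaller case to fall back on. Your own analysis in fact pinpoints what is missing: the residual case forces $c\in C\cap(A+J)$, so an extra hypothesis of the type $C\cap(A+J)=\varnothing$ (or whatever additional structure from the externally $\overrightarrow{C}_3$-free partition is carried by the formulation in \cite{Ll-NL}) is needed to kill it. Since the present paper only cites the proof from \cite{Ll-NL}, the way to complete this is to import that extra information or reproduce the original argument, not to try to force a contradiction in the $d\in-J$ case from $A\cap C=\varnothing$ and $\overline{J}=-J\cup\{0\}$ alone.
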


Suppose that $\pi =A\mid B\mid C$ is an externally $\overrightarrow{C}_{3}$%
-free partition of $\overrightarrow{C}_{2n+1}(J).$ Then by Lemma \ref{ABCJ}
and using (\ref{SumInt}) we have
\begin{gather*}
\quad \left( \left( \left( \left( \left( A+J\right) \cap B\right) +J\right)
\cap C\right) +J\right) \cap A=\varnothing \\
\Leftrightarrow \left( \left( \left( \left( A+J\right) \cap B\right)
+J\right) \cap C\right) \cap \left( A-J\right) =\varnothing \\
\Leftrightarrow \left( \left( \left( A+J\right) \cap B\right) +J\right) \cap
\left( C\cap \left( A-J\right) \right) =\varnothing \\
\Leftrightarrow \left( \left( A+J\right) \cap B\right) \cap \left( \left(
C\cap \left( A-J\right) \right) -J\right) =\varnothing .
\end{gather*}%
By Lemma \ref{Aux}, the last equality is equivalent to
\begin{gather*}
\left( \left( A+J\right) \cap B\right) \cap \left( \left( C-J\right) \cap
\left( A-J-J\right) \right) =\varnothing \\
\Leftrightarrow \left( A+J\right) \cap B\cap \left( C-J\right) \subseteq
\overline{A-J-J},
\end{gather*}%
where the complement is relative to $\mathbb{Z}_{2n+1}.$ So we have proved
the following

\begin{lemma}
Let $\pi =A\mid B\mid C$ be an externally $\overrightarrow{C}_{3}$-free
partition of $\overrightarrow{C}_{2n+1}(J).$ Then \label{AJJ}%
\begin{gather*}
\left( \left( \left( \left( \left( A+J\right) \cap B\right) +J\right) \cap
C\right) +J\right) \cap A=\varnothing \\
\Leftrightarrow \left( A+J\right) \cap B\cap \left( C-J\right) \subseteq
\overline{A-J-J}.
\end{gather*}
\end{lemma}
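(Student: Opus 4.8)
The plan is to transform the equality on the left of the asserted equivalence into the containment on the right by a chain of biconditional set manipulations, using only relation (\ref{SumInt}), the associativity and commutativity of intersection, and Lemma \ref{Aux}. The partition hypothesis on $\pi$ enters exactly once, to supply $A\cap C=\varnothing$, and the fact that $J$ is a symbol set enters exactly once, to supply $\overline{J}=-J\cup\{0\}$; these two facts are precisely the hypotheses needed to invoke Lemma \ref{Aux}.

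First I would peel off the outermost layer. Writing $Y=(((A+J)\cap B)+J)\cap C$, the left-hand side reads $(Y+J)\cap A=\varnothing$, which by (\ref{SumInt}) is equivalent to $Y\cap(A-J)=\varnothing$; regrouping the intersection, this is $(((A+J)\cap B)+J)\cap(C\cap(A-J))=\varnothing$. Applying (\ref{SumInt}) once more, now with the sets $(A+J)\cap B$ and $C\cap(A-J)$ and the summand $J$, rewrites the displayed equality as $((A+J)\cap B)\cap\bigl((C\cap(A-J))-J\bigr)=\varnothing$.

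Next I would rewrite the last factor. By Lemma \ref{Aux}, which applies since $A\cap C=\varnothing$ and $\overline{J}=-J\cup\{0\}$, we have $(C\cap(A-J))-J=(C-J)\cap(A-J-J)$, so the equality becomes $(A+J)\cap B\cap(C-J)\cap(A-J-J)=\varnothing$. Finally, since $S\cap T=\varnothing$ is equivalent to $S\subseteq\overline{T}$ with the complement taken relative to $\mathbb{Z}_{2n+1}$, taking $S=(A+J)\cap B\cap(C-J)$ and $T=A-J-J$ gives exactly $(A+J)\cap B\cap(C-J)\subseteq\overline{A-J-J}$. Every step being reversible, the equivalence follows.

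I do not expect a genuine obstacle here; the argument is a sequence of bookkeeping steps. The points that require attention are applying (\ref{SumInt}) in the correct orientation at each of its two uses, invoking Lemma \ref{Aux} with $A$ and $C$ (rather than $B$) filling its two slots, and keeping in mind that the final complement is relative to $\mathbb{Z}_{2n+1}$. It is worth noting in passing that, by Lemma \ref{ABCJ}, the left-hand equality holds automatically for every externally $\overrightarrow{C}_{3}$-free partition, so the substantive content of the statement is the containment on the right.
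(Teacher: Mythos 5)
Your proposal is correct and follows essentially the same route as the paper: two applications of (\ref{SumInt}) to strip the outer ``$+J$'' layers, a regrouping of intersections, Lemma \ref{Aux} (legitimately invoked via $A\cap C=\varnothing$ and $\overline{J}=-J\cup\{0\}$), and the final translation of disjointness into containment in the complement relative to $\mathbb{Z}_{2n+1}$. Nothing further is needed.
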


We will follow the procedure provided in \cite{Ll-NL}. First, we
show the following proposition, whose proof is made in the same way as did
in Proposition 5 of \cite{Ll-NL}, using Lemmas \ref{ABCJ}, \ref{Aux} and \ref%
{AJJ} stated before. In fact, the proof we give here is shorter than that
given in our previous paper.

\begin{proposition}
Let $n$ be a positive integer and $\overrightarrow{C}_{2n+1}(J)$ be a
circulant tournament, where $J$ is neither an arithmetic progression nor a
quasi-periodic set. Then $\overrightarrow{C}_{2n+1}(J)$ is $\overrightarrow{%
\omega }_{3}$-keen. \label{w3-keen}
\end{proposition}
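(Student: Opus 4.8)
The plan is to show that every optimal externally $\overrightarrow{C}_{3}$-free partition of $\overrightarrow{C}_{2n+1}(J)$ has exactly one singular class, exploiting the fact that $J$ is neither an arithmetic progression nor quasi-periodic, so by the corollary after Theorem~\ref{comp-quasi} (via Kemperman's theorem), $\left\vert J+J\right\vert = 2\left\vert J\right\vert = 2n$, i.e. $\overline{J+J}=\{0\}$, equivalently $J+J=\mathbb{Z}_{2n+1}\setminus\{0\}$. First I would recall from \cite{VNL} (Remark~4.1) that $\overrightarrow{\omega}_{3}(T)\geq 2$ always, and that a circulant tournament is regular, so by Proposition~\ref{prop1} every externally $\overrightarrow{C}_{3}$-free partition has \emph{at most} one singular class; hence, by Remark~\ref{obs1}, it suffices to prove that $\overrightarrow{\omega}_{3}(\overrightarrow{C}_{2n+1}(J))=2$ and exhibit (or deduce the existence of) an optimal partition with exactly one singular class. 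Actually, following the strategy of Proposition~5 in \cite{Ll-NL}, the cleaner route is: assume for contradiction that there is an externally $\overrightarrow{C}_{3}$-free partition $\pi$ with three or more classes, none of them forced; reduce to the case of a partition $\pi = A\mid B\mid C$ by merging classes (a coarsening of an externally $\overrightarrow{C}_{3}$-free partition may fail to be $\overrightarrow{C}_{3}$-free, so instead pick any three classes and note the induced sub-partition on their union together with the rest lumped in is still externally $\overrightarrow{C}_{3}$-free on the relevant arcs) — more precisely, I would directly work with a hypothetical partition into at least two non-singular classes, or into $\geq 3$ classes, and derive a contradiction with $J+J=\mathbb{Z}_{2n+1}\setminus\{0\}$.

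The key computational step is to invoke Lemma~\ref{AJJ}: for an externally $\overrightarrow{C}_{3}$-free partition $\pi = A\mid B\mid C$ of $\overrightarrow{C}_{2n+1}(J)$, we have
\begin{equation*}
(A+J)\cap B\cap(C-J)\subseteq \overline{A-J-J}.
\end{equation*}
Since $J+J = \mathbb{Z}_{2n+1}\setminus\{0\}$, for any fixed $a\in A$ the set $a - (J+J) = a - \mathbb{Z}_{2n+1}\setminus\{0\} = \mathbb{Z}_{2n+1}\setminus\{a\}$, so $A - J - J \supseteq \mathbb{Z}_{2n+1}\setminus\{a\}$ for any single $a$; thus $\overline{A-J-J}\subseteq \{a\}$ for every $a\in A$, which forces $\overline{A-J-J}=\varnothing$ unless $\left\vert A\right\vert = 1$ (if $\left\vert A\right\vert\geq 2$, pick $a\neq a'$ in $A$; then $\overline{A-J-J}\subseteq\{a\}\cap\{a'\}=\varnothing$). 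So the right-hand side of the inclusion is empty as soon as $\left\vert A\right\vert\geq 2$, forcing $(A+J)\cap B\cap(C-J)=\varnothing$; and by symmetry of the roles of $A,B,C$ under the $\overrightarrow{C}_{3}$-free condition (cyclically rotating the partition in Lemma~\ref{ABCJ}), the same conclusion holds with $A,B,C$ permuted cyclically. Then I would show that $(A+J)\cap B\cap(C-J)=\varnothing$ together with $A\cup B\cup C=\mathbb{Z}_{2n+1}$ and $J+J=\mathbb{Z}_{2n+1}\setminus\{0\}$ is impossible when all three classes are nonempty and at least two have size $\geq 2$: translating via \eqref{SumInt}, $(A+J)\cap B\cap(C-J)=\varnothing$ means $B\subseteq \overline{A+J}\cup\overline{C-J}$, and counting sizes using $\left\vert A+J\right\vert\geq \left\vert A\right\vert+\left\vert J\right\vert-1$ (Kneser, with $H$ trivial here since $J$ is aperiodic — but I must be careful, $A+J$ need not be aperiodic, so I would use the weaker bound $\left\vert A+J\right\vert \geq \min(2n+1, \left\vert A\right\vert + \left\vert J\right\vert - 1)$ and similarly for $C-J$) to derive $\left\vert B\right\vert \leq (2n+1-\left\vert A+J\right\vert) + (2n+1-\left\vert C-J\right\vert)$, which combined with $\left\vert A\right\vert+\left\vert B\right\vert+\left\vert C\right\vert = 2n+1$ and $\left\vert J\right\vert = n$ yields a numerical contradiction unless at most one class is non-singular.

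The main obstacle I anticipate is the bookkeeping for the three cyclic instances of Lemma~\ref{AJJ} simultaneously and handling the degenerate cases where one or two of $A,B,C$ are singletons (where the size bound $\left\vert A+J\right\vert\geq\left\vert A\right\vert+\left\vert J\right\vert-1$ is tight and the complement $\overline{A-J-J}$ need not be empty — here the argument has to use that a \emph{singleton} gives $\overline{A-J-J}=\{a\}$, so we only get $(A+J)\cap B\cap(C-J)\subseteq\{a\}$, and then we must track this single point through the other two rotated inclusions to still reach a contradiction with $A\cup B\cup C=\mathbb{Z}_{2n+1}$). A second subtlety is passing from the three-class case to an arbitrary $k\geq 3$ partition: I would argue that if $\pi$ has $k\geq 3$ classes then it has at least two non-singular ones (since $\sum\left\vert V_i\right\vert = 2n+1\geq k$ would not by itself force this, but Proposition~\ref{prop1} gives at most one singular class, so $k\geq 3$ implies at least two classes of size $\geq 2$), and then restrict attention to two such classes $V_i, V_j$ plus the union $W$ of all remaining classes, noting $V_i\mid V_j\mid W$ need not be externally $\overrightarrow{C}_{3}$-free in general — so instead I would work with $V_i$, $V_j$ and a single third class $V_\ell$, ignoring arcs not among these three, and apply Lemma~\ref{ABCJ} to that triple directly (the lemma's hypothesis only needs $A,B,C$ to be pairwise-disjoint classes of an externally $\overrightarrow{C}_{3}$-free partition, which they are). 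With $\left\vert V_i\right\vert, \left\vert V_j\right\vert \geq 2$, the size count above then applies and gives the contradiction, completing the proof that $\overrightarrow{C}_{2n+1}(J)$ is $\overrightarrow{\omega}_{3}$-keen.
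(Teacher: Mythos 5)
Your proposal follows the paper's route up to the decisive point: for an optimal partition with $k\ge 3$ classes and no singular class, pass to a three-class externally $\overrightarrow{C}_{3}$-free partition $A\mid B\mid C$ with $|A|,|B|,|C|\ge 2$, apply Lemmas \ref{ABCJ} and \ref{AJJ}, and use Kemperman's theorem to get $|J+J|=2n$, i.e. $J+J=\mathbb{Z}_{2n+1}\setminus\{0\}$. Two remarks on your reduction: the worry that coarsening might destroy the externally $\overrightarrow{C}_{3}$-free property is unfounded, since the external arcs of a coarser partition form a subset of the external arcs of the finer one (this is exactly why the paper simply merges classes); and your alternative of applying Lemma \ref{ABCJ} to just three classes $V_i,V_j,V_\ell$ of a $k$-class partition is incompatible with your later counting, which needs $A\cup B\cup C=\mathbb{Z}_{2n+1}$.

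The genuine gap is the endgame. The paper closes with a cardinality estimate on $A-J-J$ itself (via Theorem \ref{Kemperman2}/Kneser), namely $2n+1\ge \left\vert A-J-J\right\vert \ge \left\vert A\right\vert +\left\vert -(J+J)\right\vert =\left\vert A\right\vert +2n$, which forces $\left\vert A\right\vert \le 1$ and contradicts $\left\vert A\right\vert \ge 2$. You instead conclude that $\overline{A-J-J}=\varnothing$ when $\left\vert A\right\vert\ge 2$, hence $(A+J)\cap B\cap (C-J)=\varnothing$, and then try to refute this emptiness by a covering count; that count is vacuous: from $B\subseteq \overline{A+J}\cup\overline{C-J}$ together with $\left\vert A+J\right\vert\ge \left\vert A\right\vert+n-1$ and $\left\vert C-J\right\vert\ge \left\vert C\right\vert+n-1$ you only get $\left\vert B\right\vert\le 2n+4-\left\vert A\right\vert-\left\vert C\right\vert=\left\vert B\right\vert+3$, no contradiction at all. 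Worse, the auxiliary claim that such emptiness is impossible once the classes are nonempty and two (or even all three) have size at least $2$ is false: take $B=\{u,v\}$, $A=(u+J)\cap (v+J)$ and $C$ the rest; since $0\notin J+J$ we get $(A+J)\cap B=\varnothing$, so the triple intersection is empty, while all three classes can have size at least $2$. So one such emptiness condition (and you give no argument that its cyclic rotations are jointly contradictory) cannot finish the proof. Finally, the bound $\left\vert A+J\right\vert\ge \min (2n+1,\left\vert A\right\vert+\left\vert J\right\vert-1)$ is not a ``weaker bound'' available for free in the composite cyclic group $\mathbb{Z}_{2n+1}$: it is a Cauchy--Davenport-type statement that must itself be extracted from Kneser's theorem using the aperiodicity/non-quasi-periodicity of $J$, which is precisely where the hypothesis has to work. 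As written, the proposal does not reach a contradiction, so the keenness claim is not established.
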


\begin{proof}
Let us suppose that $\overrightarrow{\omega }_{3}(\overrightarrow{C}%
_{2n+1}(J))=k\geq 2.$ If $k=2,$ then by Remark \ref{obs1}(ii), $%
\overrightarrow{C}_{2n+1}(J)$ is $\overrightarrow{\omega }_{3}$-keen.
Therefore, we consider the case when $k\geq 3$. By Proposition \ref{prop1},
the tournament $\overrightarrow{C}_{2n+1}(J)$ has at most one singular
equivalence class in every externally $\overrightarrow{C}_{3}$-free
partition. Suppose that there exists an optimal externally $\overrightarrow{C%
}_{3}$-free partition of $V(\overrightarrow{C}_{2n+1}(J))$ without singular
equivalence classes. Let $\pi =V_{1}\mid V_{2}\mid \cdots \mid V_{k}$ $%
(k\geq 3)$ be this optimal externally $\overrightarrow{C}_{3}$-free
partition of $\mathbb{Z}_{p}$ such that $2\leq \left\vert V_{1}\right\vert
\leq \left\vert V_{2}\right\vert \leq \cdots \leq \left\vert
V_{k}\right\vert $ and define the sets%
\begin{equation*}
A=V_{1},\text{ }B=\bigcup\limits_{i=1}^{m}V_{j_{i}}\text{ and }%
C=\bigcup\limits_{i=m}^{k}V_{j_{i}},
\end{equation*}%
where $1\leq m\leq k-1$ and the set family $\{V_{j_{i}}:i=2,3,...k\}$ is a
permutation of $\{V_{j}:j=2,3,...k\}.$ Without loss of generality, we can
assume that $2\leq \left\vert A\right\vert \leq \left\vert B\right\vert $
and $\left\vert A\right\vert \leq \left\vert C\right\vert .$ Since $\pi $ is
an externally $\overrightarrow{C}_{3}$-free partition, so is $\pi ^{\prime
}=A\mid B\mid C$. Then by Lemma \ref{ABCJ},
\begin{equation*}
\left( \left( \left( \left( \left( A+J\right) \cap B\right) +J\right) \cap
C\right) +J\right) \cap A=\varnothing
\end{equation*}%
which is equivalent to $$\left( A+J\right) \cap B\cap \left( C-J\right)
\subseteq \overline{A-J-J}$$
by Lemma \ref{AJJ} (recall that the complement
is relative to $\mathbb{Z}_{2n+1}).$ Since $J$ is neither an arithmetic
progression nor a quasi-periodic set by assumption, $J$ is aperiodic and
hence $J+J$ is aperiodic too in virtue of Corollary \ref{J-aperiodic}. Hence, $%
\left\vert J+J\right\vert =2n$ and applying Theorem \ref{Kemperman2}, we
have that
\begin{equation*}
2n+1\geq \left\vert A+(-J-J)\right\vert =\left\vert A-J-J\right\vert \geq
\left\vert A\right\vert +\left\vert -(J+J)\right\vert =\left\vert
A\right\vert +2n.
\end{equation*}%
This implies that $\left\vert A\right\vert \leq 1,$ a contradiction to the
assumption that $\left\vert A\right\vert \geq 2.$
\end{proof}

Notice that cyclic circulant tournaments $\overrightarrow{C}%
_{2n+1}(1,2,...,n)$ (the case when $J$ is an arithmetic progression) are
tight (Theorem 4.4(i) of \cite{VNL}) and therefore, $\overrightarrow{\omega }%
_{3}$-keen by Remark \ref{obs1}(ii). Applying Proposition \ref{w3-keen} to
the case when $J$ is aperiodic, we have the following

\begin{corollary}
Simple circulant tournaments $\overrightarrow{C}_{2n+1}(J)$ are $%
\overrightarrow{\omega }_{3}$-keen. \label{cor1}
\end{corollary}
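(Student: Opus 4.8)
The plan is to split according to whether the symbol set $J$ is an arithmetic progression, reducing each case to a result already at hand. The crucial first step is to translate the word ``simple''. By definition a simple digraph is not isomorphic to any composition $D[F]$; in particular $\overrightarrow{C}_{2n+1}(J)$ is not isomorphic to a composition of two circulant tournaments, which by Theorem \ref{comp-quasi} says precisely that $J$ is \emph{not} quasi-periodic. Recall also, from the remarks opening Section~3, that the symbol set of a circulant tournament is never periodic; hence $J$ is aperiodic.

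Now I would argue in two cases. If $J$ is not an arithmetic progression, then $J$ is neither an arithmetic progression nor a quasi-periodic set, so Proposition \ref{w3-keen} applies verbatim and $\overrightarrow{C}_{2n+1}(J)$ is $\overrightarrow{\omega}_3$-keen. If instead $J=\{a+id:0\le i\le n-1\}$ is an arithmetic progression, then its common difference $d$ is necessarily coprime to the odd integer $2n+1$ (otherwise the $n$ distinct elements of $J$ would fit inside a single coset of a proper subgroup of $\mathbb{Z}_{2n+1}$, which has fewer than $n$ elements), so the automorphism $x\mapsto d^{-1}x$ of $\mathbb{Z}_{2n+1}$ carries $\overrightarrow{C}_{2n+1}(J)$ onto a circulant tournament whose symbol set consists of $n$ consecutive residues, which is forced to be $\{1,2,\dots,n\}$; thus $\overrightarrow{C}_{2n+1}(J)$ is isomorphic to the cyclic tournament. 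By Theorem~4.4(i) of \cite{VNL} the cyclic tournament is tight, that is, its $\overrightarrow{C}_3$-free disconnection equals $2$, and being regular it is $\overrightarrow{\omega}_3$-keen by Remark \ref{obs1}(ii). Hence in both cases $\overrightarrow{C}_{2n+1}(J)$ is $\overrightarrow{\omega}_3$-keen.

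There is essentially no obstacle here: the one point that deserves a word is the opening translation, where Theorem \ref{comp-quasi} does the real work in passing from ``$\overrightarrow{C}_{2n+1}(J)$ is simple'' to ``$J$ is not quasi-periodic''. After that, the non-progression case is a direct application of Proposition \ref{w3-keen}, and the progression case is covered by the already known value of the $\overrightarrow{C}_3$-free disconnection of the cyclic tournament together with Remark \ref{obs1}(ii). So I expect the argument to be little more than an assembly of results stated above, and only a few lines long.
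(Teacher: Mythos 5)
Your proposal is correct and follows essentially the same route as the paper: translate simplicity into non-quasi-periodicity of $J$ via Theorem \ref{comp-quasi}, apply Proposition \ref{w3-keen} when $J$ is not an arithmetic progression, and fall back on the tightness of the cyclic tournament (Theorem 4.4(i) of \cite{VNL}) plus Remark \ref{obs1}(ii) when it is; your extra verification that an arithmetic-progression symbol set yields the cyclic tournament is a detail the paper only asserts. One tiny slip: $d^{-1}J$ could equally be $\{n+1,\dots,2n\}$ rather than $\{1,\dots,n\}$, but multiplying instead by $-d^{-1}$ (negation is an isomorphism onto the tournament with symbol set $-J$) repairs this immediately.
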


On the other hand, if $J$ is quasi-periodic, then Theorem \ref{comp-quasi} implies that $\overrightarrow{C}%
_{2n+1}(J)$ is a composition of circulant tournaments, say $T_{1}[T_{2}]$ ($T_{1}$ and $T_{2}$ not necessarily distinct).
 If $T_{1}$ is not simple, then $%
T_{1}=T_{3}[T_{4}]$. By Proposition \ref{comp}, the tournaments $T_{3}$ and $T_{4}$ are circulant.
By the associativity of the composition,
\begin{equation*}
T_{1}[T_{2}]=(T_{3}[T_{4}])[T_{2}]=T_{3}[T_{4}[T_{2}]].
\end{equation*}%
Analogously, if $T_{2}$ is not simple, then there exist circulant tournaments $T_{5}$ and $T_{6}$ such that
$$T_{2}=T_{5}[T_{6}] \text{ and } T_{1}[T_{2}]=T_{1}[T_{5}[T_{6}]].$$
If both $T_{1}$ and $T_{2}$ are not simple, then we have a composition of type
$$ T_{1}[T_{2}]=(T_{3}[T_{4}])[T_{5}[T_{6}]]=T_{3}[T_{4}[T_{5}[T_{6}]]]. $$
Observe that for $3 \leq j \leq 6 $, the tournaments $T_{j}$ are not necessarily distinct.
The procedure
can be repeated until the tournament $\overrightarrow{C}_{2n+1}(J)$ is
isomorphic to the nested composition
\begin{equation*}
T_{1}[T_{2}[\cdots T_{k-1}[T_{k}]\cdots ]],
\end{equation*}%
where $T_{i}$ is simple for every $1\leq i \leq k.$ We have proved the following proposition.

\begin{proposition}
Let $\overrightarrow{C}_{2n+1}(J)$ be a composition of circulant
tournaments. Then there exist $T_{1},T_{2},\ldots ,T_{k}$ simple circulant
tournaments such that $\overrightarrow{C}_{2n+1}(J)$ is isomorphic to \label%
{decomp}%
\begin{equation*}
T_{1}[T_{2}[\cdots T_{k-1}[T_{k}]\cdots ]].
\end{equation*}
\end{proposition}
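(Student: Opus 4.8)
\textbf{Proof plan for Proposition \ref{decomp}.}

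The plan is to make precise the informal iteration sketched in the paragraph preceding the statement, turning it into a clean induction on the order $2n+1$ of the tournament. First I would set up the induction: the claim is trivial if $\overrightarrow{C}_{2n+1}(J)$ is already simple (take $k=1$), so assume it is a composition of circulant tournaments. By Theorem \ref{comp-quasi} (or directly by hypothesis via Proposition \ref{comp}), we may write $\overrightarrow{C}_{2n+1}(J)\cong T[T']$ where $T=\overrightarrow{C}_{2a+1}(J')$ and $T'=\overrightarrow{C}_{2b+1}(K')$ are circulant tournaments with $(2a+1)(2b+1)=2n+1$ and $2a+1,2b+1\geq 3$. Hence both factors have order strictly smaller than $2n+1$, which is exactly what is needed to invoke the induction hypothesis on each.

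Next I would apply the induction hypothesis to $T$ and to $T'$ separately: there exist simple circulant tournaments $S_1,\ldots,S_r$ and $S'_1,\ldots,S'_t$ with $T\cong S_1[S_2[\cdots S_{r-1}[S_r]\cdots]]$ and $T'\cong S'_1[S'_2[\cdots S'_{t-1}[S'_t]\cdots]]$. Then I would combine these using associativity of the composition (stated in the excerpt, with the easy proof left to the reader):
\begin{equation*}
\overrightarrow{C}_{2n+1}(J)\cong T[T']\cong S_1[S_2[\cdots S_{r-1}[S_r]\cdots]][\,T'\,]\cong S_1[S_2[\cdots S_{r-1}[S_r[T']]\cdots]],
\end{equation*}
and then expand $T'$ inside the innermost slot to obtain a nested composition of the simple circulant tournaments $S_1,\ldots,S_r,S'_1,\ldots,S'_t$, which is the desired form with $k=r+t$. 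One should remark (as the paragraph does) that the factors need not be distinct. This finishes the induction.

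The only real points requiring care — and the place where an obstacle could hide — are bookkeeping rather than substance: one must check that each composition $S_i[\cdots]$ appearing in the nesting is again a circulant tournament (so that Theorem \ref{comp-quasi}, Proposition \ref{comp}, and the induction hypothesis genuinely apply to the sub-blocks), which follows from Proposition \ref{comp} since a composition of circulant tournaments is circulant and composition is associative; and one must verify that the recursion actually terminates, which is guaranteed because the order of each factor in a nontrivial composition is a proper divisor of $2n+1$ and hence strictly smaller, so the strong induction on the order is well-founded. No additive number theory is needed here — that machinery was already spent in establishing Theorem \ref{comp-quasi}; the present statement is purely a structural consequence of associativity plus a well-ordering argument.
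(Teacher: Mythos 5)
Your proposal is correct and is essentially the paper's own argument in inductive dress: the paper likewise takes the given decomposition into two circulant factors, repeatedly decomposes any non-simple factor and flattens the result using associativity of the composition, stopping when every factor is simple. The only difference is presentational — you make the termination explicit via strong induction on the order (each factor's order is a proper divisor), whereas the paper phrases it as "the procedure can be repeated" — so the two proofs coincide in substance.
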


We notice that the proposition stated before is a very special case of a
decomposition of a tournament (digraph) into simple (prime) tournaments
(digraphs) in the case of the lexicographic product. The problem for general
graphs is studied in Chapter 6 of \cite{K-I} by Klav\v{z}ar and Imrich and
for digraphs in \cite{D-I} by D\"{o}rfler and Imrich.

Proposition \ref{decomp}, Corollary \ref{cor1} and the repeated application
of Proposition \ref{w3-comp} yield the following consequence:

\begin{corollary}
Compositions of circulant tournaments are $\overrightarrow{\omega }_{3}$%
-keen. \label{cor2}
\end{corollary}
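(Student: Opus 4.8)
The plan is to combine the two main structural ingredients already in place: Corollary~\ref{cor1}, which says that simple circulant tournaments are $\overrightarrow{\omega}_3$-keen, and Proposition~\ref{w3-comp}, which describes how $\overrightarrow{\omega}_3$-keenness propagates through compositions. The statement to be proved is Corollary~\ref{cor2}: any composition of circulant tournaments is $\overrightarrow{\omega}_3$-keen. So first I would invoke Proposition~\ref{decomp} to rewrite the given composition $\overrightarrow{C}_{2n+1}(J)$ as a nested composition $T_1[T_2[\cdots T_{k-1}[T_k]\cdots]]$ of simple circulant tournaments $T_1,\dots,T_k$. By Corollary~\ref{cor1}, each $T_i$ is $\overrightarrow{\omega}_3$-keen.

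The core of the argument is then an induction on $k$, the number of simple factors. The base case $k=1$ is exactly Corollary~\ref{cor1}. For the inductive step, suppose $T_2[\cdots T_{k-1}[T_k]\cdots]$ is $\overrightarrow{\omega}_3$-keen (this digraph is again a circulant tournament by Proposition~\ref{comp}, applied repeatedly, so it is a legitimate ``$F$'' in the statement of Proposition~\ref{w3-comp}, and more importantly it is a $\overrightarrow{\omega}_3$-keen digraph). Now apply the ``moreover'' clause of Proposition~\ref{w3-comp} with $D=T_1$ and $F=T_2[\cdots T_k \cdots]$: since both $D$ and $F$ are $\overrightarrow{\omega}_3$-keen, the composition $D[F]=T_1[T_2[\cdots T_k\cdots]]$ is $\overrightarrow{\omega}_3$-keen. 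Since $\overrightarrow{C}_{2n+1}(J)$ is isomorphic to this composition and $\overrightarrow{\omega}_3$-keenness is clearly an isomorphism invariant, we are done.

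I expect essentially no obstacle here: this is a bookkeeping corollary that strings together Proposition~\ref{decomp}, Corollary~\ref{cor1}, and the preservation property in Proposition~\ref{w3-comp}. The only point requiring the slightest care is making sure that at each stage the ``inner'' digraph is itself $\overrightarrow{\omega}_3$-keen before applying the moreover clause of Proposition~\ref{w3-comp} --- this is precisely what the induction hypothesis supplies, and it is why the induction is set up peeling off the outermost factor $T_1$ rather than the innermost one. (Associativity of the composition, already noted in the paper, guarantees the nested expression is unambiguous so the peeling is well-defined.) No new additive-combinatorial input is needed; all the number-theoretic work was already spent in Corollary~\ref{cor1} and Theorem~\ref{comp-quasi}.

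\begin{proof}
By Proposition~\ref{decomp}, $\overrightarrow{C}_{2n+1}(J)$ is isomorphic to a nested composition
\begin{equation*}
T_{1}[T_{2}[\cdots T_{k-1}[T_{k}]\cdots ]]
\end{equation*}
where each $T_{i}$ is a simple circulant tournament, hence $\overrightarrow{\omega }_{3}$-keen by Corollary~\ref{cor1}. Since $\overrightarrow{\omega }_{3}$-keenness is invariant under isomorphism, it suffices to show that this nested composition is $\overrightarrow{\omega }_{3}$-keen. We argue by induction on $k$. If $k=1$ the claim is Corollary~\ref{cor1}. Assume $k\geq 2$ and that $F:=T_{2}[\cdots T_{k-1}[T_{k}]\cdots ]$ is $\overrightarrow{\omega }_{3}$-keen (by the induction hypothesis, since $F$ is, by Proposition~\ref{comp} and the associativity of the composition, itself a composition of the $k-1$ simple circulant tournaments $T_{2},\ldots ,T_{k}$). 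Both $D:=T_{1}$ and $F$ are $\overrightarrow{\omega }_{3}$-keen, so by the ``moreover'' part of Proposition~\ref{w3-comp} the composition $D[F]=T_{1}[T_{2}[\cdots T_{k-1}[T_{k}]\cdots ]]$ is $\overrightarrow{\omega }_{3}$-keen. This completes the induction and the proof.
\end{proof}
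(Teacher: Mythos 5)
Your proof is correct and follows exactly the paper's route: the paper derives Corollary~\ref{cor2} in one sentence from Proposition~\ref{decomp}, Corollary~\ref{cor1}, and ``repeated application'' of Proposition~\ref{w3-comp}, which is precisely the induction you spell out. Your version merely makes the implicit induction (and the keenness of the inner factor at each stage) explicit, which is fine.
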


Putting together Proposition \ref{w3-keen} and Corollaries \ref{cor1} and %
\ref{cor2}, we obtain the following result.

\begin{theorem}
Every circulant tournament is $\overrightarrow{\omega }_{3}$-keen. \label%
{keen-w3}
\end{theorem}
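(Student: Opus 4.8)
The plan is to handle the two possibilities for the symbol set $J$ of a circulant tournament and then assemble the pieces that have already been established. Recall from the discussion at the start of Section 3 that $J$ is never periodic, so by Theorems \ref{Kneser} and \ref{Kemperman2} exactly one of the following holds: $J$ is an arithmetic progression, $J$ is aperiodic but not an arithmetic progression, or $J$ is quasi-periodic. (In fact, by Theorem \ref{comp-quasi}, $J$ is quasi-periodic precisely when $\overrightarrow{C}_{2n+1}(J)$ is a composition of circulant tournaments, and otherwise $J$ is aperiodic, in which case the tournament is simple.)

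First I would dispose of the case in which $J$ is an arithmetic progression: then $\overrightarrow{C}_{2n+1}(J)$ is (up to isomorphism) the cyclic tournament $\overrightarrow{C}_{2n+1}(1,2,\dots,n)$, which is tight by Theorem 4.4(i) of \cite{VNL}, and hence $\overrightarrow{\omega}_{3}$-keen by Remark \ref{obs1}(ii). Next, if $J$ is aperiodic but not an arithmetic progression, Proposition \ref{w3-keen} applies directly and gives that $\overrightarrow{C}_{2n+1}(J)$ is $\overrightarrow{\omega}_{3}$-keen; together with the arithmetic-progression case (or more simply via Corollary \ref{cor1}) this covers all simple circulant tournaments. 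Finally, if $J$ is quasi-periodic, then by Theorem \ref{comp-quasi} the tournament is a composition of circulant tournaments, and Corollary \ref{cor2} (obtained from Proposition \ref{decomp} together with repeated use of Proposition \ref{w3-comp}) tells us it is $\overrightarrow{\omega}_{3}$-keen. Since these three (or two, after merging) cases are exhaustive, the theorem follows.

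The only genuine content is already packaged in Proposition \ref{w3-keen} and Corollary \ref{cor2}; the theorem itself is a short case analysis, so I expect no real obstacle. The one point to be careful about is verifying that the trichotomy on $J$ is indeed exhaustive — i.e., that a non-periodic set which is neither an arithmetic progression nor quasi-periodic must be aperiodic (immediate from the definition, since non-periodic means $H(J)=\{0\}$) and that Corollary \ref{cor1} plus Corollary \ref{cor2} between them cover every circulant tournament (the simple ones and the compositions, respectively, which exhaust all cases by Theorem \ref{comp-quasi}). With that observation in hand the proof is a one-paragraph assembly.

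\begin{proof}
Let $\overrightarrow{C}_{2n+1}(J)$ be a circulant tournament. As noted at the beginning of Section 3, $J$ is not periodic, so by Theorems \ref{Kneser} and \ref{Kemperman2} either $J$ is aperiodic or $J$ is quasi-periodic. If $J$ is aperiodic, then $\overrightarrow{C}_{2n+1}(J)$ is simple, and it is $\overrightarrow{\omega}_{3}$-keen by Corollary \ref{cor1} (this covers both the case in which $J$ is an arithmetic progression, via tightness and Remark \ref{obs1}(ii), and the case in which $J$ is neither an arithmetic progression nor quasi-periodic, via Proposition \ref{w3-keen}). If $J$ is quasi-periodic, then by Theorem \ref{comp-quasi} the tournament $\overrightarrow{C}_{2n+1}(J)$ is a composition of circulant tournaments, and hence it is $\overrightarrow{\omega}_{3}$-keen by Corollary \ref{cor2}. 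In either case $\overrightarrow{C}_{2n+1}(J)$ is $\overrightarrow{\omega}_{3}$-keen.
\end{proof}
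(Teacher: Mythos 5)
Your proof is correct and is essentially the paper's own argument: the paper proves the theorem precisely by "putting together" Proposition \ref{w3-keen} and Corollaries \ref{cor1} and \ref{cor2}, i.e., the same case split into simple circulant tournaments (covering the arithmetic-progression case via tightness and Remark \ref{obs1}(ii)) and compositions (via Theorem \ref{comp-quasi} and Corollary \ref{cor2}). The only cosmetic point is that the exhaustive dichotomy is cleanest stated as ``simple versus composition'' (with Theorem \ref{comp-quasi} translating the latter into quasi-periodicity of $J$), rather than as ``$J$ aperiodic or quasi-periodic,'' but this matches the paper's own phrasing and does not affect the argument.
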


In a similar way one can prove that

\begin{theorem}
Every circulant tournament is $\overrightarrow{\omega }$-keen. \label{keen-w}
\end{theorem}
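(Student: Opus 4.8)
The plan is to mirror the development already carried out for the $\overrightarrow{\omega}_3$ parameter, substituting the acyclic machinery for the $\overrightarrow{C}_3$-free machinery at each stage, and to point out exactly where the two arguments diverge. The backbone is: (1) a structural decomposition of quasi-periodic symbol sets into nested compositions of simple circulant tournaments (this is Proposition~\ref{decomp}, which is stated for the $\overrightarrow{\omega}$ case verbatim since it is purely about composition structure and does not reference either disconnection parameter); (2) the composition identity $\overrightarrow{\omega}(D[F])=\overrightarrow{\omega}(D)+\overrightarrow{\omega}(F)-1$ together with preservation of $\overrightarrow{\omega}$-keenness under composition (Proposition~\ref{w3-comp}, again stated for both parameters); and (3) an analogue of Proposition~\ref{w3-keen} asserting that a circulant tournament whose symbol set is neither an arithmetic progression nor quasi-periodic is $\overrightarrow{\omega}$-keen. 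Given these three ingredients, the same three-way case split used for Theorem~\ref{keen-w3} closes the argument: if $J$ is aperiodic and not an arithmetic progression, apply the analogue of Proposition~\ref{w3-keen}; if $J$ is an arithmetic progression, invoke the corresponding tightness result for cyclic tournaments (Theorem~4.4(i) of \cite{VNL} together with the $\overrightarrow{\omega}$ version of Remark~\ref{obs1}); if $J$ is quasi-periodic, use Proposition~\ref{decomp} to write $\overrightarrow{C}_{2n+1}(J)$ as a nested composition of simple circulant tournaments, each $\overrightarrow{\omega}$-keen by the first case, and iterate Proposition~\ref{w3-comp}.

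The step that requires genuine new work is ingredient (3), the acyclic analogue of Proposition~\ref{w3-keen}. Here the $\overrightarrow{C}_3$-free argument cannot be copied literally, because the combinatorial identity it rests on (Lemma~\ref{ABCJ}, and its reformulation Lemma~\ref{AJJ}) is specifically tailored to forbidding $3$-cycles through a fixed vertex class, and acyclicity is a global condition about all directed cycles, not just triangles. The plan is to observe that $\overrightarrow{\omega}(D)\le\overrightarrow{\omega}_3(D)$ always holds (every acyclic subdigraph is $\overrightarrow{C}_3$-free, so an externally acyclic partition is externally $\overrightarrow{C}_3$-free), hence an externally acyclic partition is in particular externally $\overrightarrow{C}_3$-free and must satisfy the conclusion of Lemma~\ref{ABCJ} and therefore the inclusion $(A+J)\cap B\cap(C-J)\subseteq\overline{A-J-J}$ of Lemma~\ref{AJJ}. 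The cardinality estimate via Corollary~\ref{J-aperiodic} and Theorem~\ref{Kemperman2} that forces $|A|\le 1$ then goes through unchanged. The only remaining point is the base case $k=2$: for $\overrightarrow{\omega}$ one needs the analogue of Remark~\ref{obs1}(ii), namely that a regular tournament with $\overrightarrow{\omega}(T)=2$ realized by a partition with a unique singular class is $\overrightarrow{\omega}$-keen; this follows from Proposition~\ref{prop1} exactly as before, since that proposition (being \cite{VNL}, Corollary~2.6) already bounds the number of singular classes in \emph{any} externally $\overrightarrow{C}_3$-free partition, hence a fortiori in any externally acyclic one.

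So the proof I would write runs as follows. First, state and prove the $\overrightarrow{\omega}$-analogue of Proposition~\ref{w3-keen}: let $\overrightarrow{C}_{2n+1}(J)$ be circulant with $J$ neither an arithmetic progression nor quasi-periodic; suppose $\overrightarrow{\omega}(\overrightarrow{C}_{2n+1}(J))=k\ge 2$; if $k=2$ invoke the remark above; if $k\ge 3$, assume for contradiction an optimal externally acyclic partition with no singular class, collapse it to $\pi'=A\mid B\mid C$ with $2\le|A|\le|B|$, $|A|\le|C|$, note $\pi'$ is externally acyclic hence externally $\overrightarrow{C}_3$-free, apply Lemmas~\ref{ABCJ} and \ref{AJJ}, use $|J+J|=2n$ from Corollary~\ref{J-aperiodic}, and derive $|A|\le 1$ via Theorem~\ref{Kemperman2}. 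Second, record the $\overrightarrow{\omega}$-versions of Corollaries~\ref{cor1} and \ref{cor2} (simple circulant tournaments are $\overrightarrow{\omega}$-keen; compositions of circulant tournaments are $\overrightarrow{\omega}$-keen, using Proposition~\ref{decomp} and repeated application of Proposition~\ref{w3-comp}). Third, combine with the arithmetic-progression case to conclude. I do not anticipate any obstacle beyond verifying that Lemma~\ref{ABCJ} and the two auxiliary lemmas are genuinely applicable to externally acyclic partitions --- and that is immediate from the inclusion of acyclic subdigraphs in $\overrightarrow{C}_3$-free subdigraphs.
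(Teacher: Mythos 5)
Your proposal is correct and is exactly the argument the paper intends: the paper ``proves'' Theorem~\ref{keen-w} only by remarking that it follows in a similar way as Theorem~\ref{keen-w3}, and your key observation---that every externally acyclic partition is externally $\overrightarrow{C}_{3}$-free, so Lemma~\ref{ABCJ}, Lemma~\ref{AJJ}, Proposition~\ref{prop1} and the Kneser--Kemperman cardinality estimate transfer verbatim, with Proposition~\ref{decomp} and Proposition~\ref{w3-comp} handling the quasi-periodic case and the cyclic-tournament case handled via $\overrightarrow{\omega}\le\overrightarrow{\omega}_{3}$---is precisely what makes that parallel run. Since this is essentially the same route as the paper's, there is nothing further to compare.
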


Theorems \ref{keen-w3} and \ref{keen-w} affirmatively answer the problem
posed in \cite{VNL} (p. 623, end of Section 3) already mentioned in the
Introduction and generalize Theorem 3 of \cite{Ll-NL}.

The next result is proved in a similar (not exact) way as Theorem 4 of \cite%
{Ll-NL}. The proof also uses a well-known result due to B. Alspach \cite{Als}%
: every arc of a regular tournament is contained in a directed cyclic
triangle.

\begin{theorem}
Every simple circulant tournament $\overrightarrow{C}_{2n+1}(J)$ is tight. %
\label{circ-tight}
\end{theorem}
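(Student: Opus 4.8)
The statement to prove is $\overrightarrow{\omega}_3(\overrightarrow{C}_{2n+1}(J)) = 2$. Since $\overrightarrow{\omega}_3 \ge 2$ for every tournament, the plan is to assume $\overrightarrow{\omega}_3 \ge 3$ and derive a contradiction, after first disposing of the cyclic case. If $J$ is an arithmetic progression, then multiplying the symbol set by the inverse of its common difference (which is a unit, and which induces an isomorphism of circulant tournaments) turns $J$ into an interval of length $n$; a short check shows that such an interval is a legitimate symbol set only if it equals $\{1,\dots,n\}$ or its negative, so $\overrightarrow{C}_{2n+1}(J)$ is isomorphic to the cyclic tournament, which is tight by Theorem 4.4(i) of \cite{VNL}. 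So from now on $J$ is not an arithmetic progression.

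The next step is the additive-combinatorial core. Because $\overrightarrow{C}_{2n+1}(J)$ is simple, $J$ is not quasi-periodic by Theorem \ref{comp-quasi}; since $J$ is also not an arithmetic progression, the corollary of Kemperman's theorem stated just after Theorem \ref{comp-quasi} rules out $|J+J| = 2|J| - 1$, and together with the standing inequality $2n - 1 \le |J+J| \le 2n$ this gives $|J+J| = 2n$. As $0 \notin J+J$, we conclude $J+J = \mathbb{Z}_{2n+1}\setminus\{0\}$.

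Now suppose $\overrightarrow{\omega}_3(\overrightarrow{C}_{2n+1}(J)) = k \ge 3$. By Corollary \ref{cor1}, $\overrightarrow{C}_{2n+1}(J)$ is $\overrightarrow{\omega}_3$-keen, so an optimal externally $\overrightarrow{C}_3$-free partition $\pi = V_1 \mid \dots \mid V_k$ has exactly one singular class, which by vertex-transitivity I may take to be $V_1 = \{0\}$. Since $k \ge 3$, I can partition the classes $V_2,\dots,V_k$ into two nonempty groups with unions $P$ and $Q$; then $\{0\}\mid P\mid Q$, being a coarsening of $\pi$, is still an externally $\overrightarrow{C}_3$-free partition, so Lemmas \ref{ABCJ} and \ref{AJJ} apply to it. Taking $(A,B,C) = (P,\{0\},Q)$ gives $(P+J)\cap\{0\}\cap(Q-J)\subseteq\overline{P-J-J}$; but $0\in P-J-J$, since this is equivalent to $P\cap(J+J)\ne\varnothing$ and $P\cap(J+J) = P\ne\varnothing$ (using $J+J = \mathbb{Z}_{2n+1}\setminus\{0\}$ and $0\notin P$). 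Hence $0$ lies outside the right-hand set, so $0\notin P+J$ or $0\notin Q-J$; translating these, $P\cap(-J)=\varnothing$ or $Q\cap J=\varnothing$, i.e. $P\subseteq J$ or $J\subseteq P$. Applying the same lemmas with $(A,B,C) = (Q,\{0\},P)$ yields $Q\subseteq J$ or $J\subseteq Q$. Each of the four combinations is impossible: $P\subseteq J$ and $Q\subseteq J$ force $2n = |P\cup Q|\le|J| = n$; $J\subseteq P$ and $J\subseteq Q$ force $J\subseteq P\cap Q=\varnothing$; and $P\subseteq J\subseteq Q$ (or the symmetric case) forces the nonempty set $P$ into $Q = \mathbb{Z}_{2n+1}\setminus(\{0\}\cup P)$. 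This contradiction gives $k\le 2$, completing the proof.

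The hard part, in my view, is the second paragraph: pinning $|J+J|$ down to exactly $2n$ for simple non-cyclic circulant tournaments via Kemperman's structure theorem, and then being careful that coarsening $\pi$ to three classes preserves external $\overrightarrow{C}_3$-freeness so that Lemma \ref{AJJ} is legitimately available. Everything afterwards is bookkeeping. I would also record the small-order sanity check that every symbol set of size at most $2$ is an arithmetic progression, so the non-cyclic case occurs only for $n\ge 3$, and note that Lemma \ref{AJJ} can be sidestepped by arguing directly with the directed triangles $0\to v\to w\to 0$ through the singular vertex (with $v\in J$, $w\in -J$), none of which may be rainbow — Alspach's theorem guaranteeing enough such triangles — which forces the same merging of the remaining classes.
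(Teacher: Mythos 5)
Your proof is correct, and its endgame takes a genuinely different route from the paper's. Both arguments share the same skeleton: reduce to a three-class externally $\overrightarrow{C}_3$-free partition whose singular class is $\{0\}$ (Corollary \ref{cor1} plus vertex-transitivity, coarsening the remaining classes into two blocks), and exploit $\left\vert J+J\right\vert =2n$, i.e. $J+J=\mathbb{Z}_{2n+1}\setminus \{0\}$. From there the paper places the singleton in the role $A$ of Lemma \ref{AJJ}, deduces $J_{B}+J\subseteq B=J_{B}\cup (-J_{B})$, applies the Kneser--Kemperman lower bound a second time to force $-J\subseteq B$ and $C\subseteq J$, and finally invokes Alspach's theorem \cite{Als} to produce a $3$-colored triangle through $0$. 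You instead place $\{0\}$ in the middle role $B$, note that $0\notin \overline{P-J-J}$ and $0\notin \overline{Q-J-J}$ because $J+J$ misses only $0$, and read off the dichotomies ($P\subseteq J$ or $J\subseteq P$; $Q\subseteq J$ or $J\subseteq Q$), which collapse by elementary counting and disjointness. This buys a shorter finish needing neither a second sumset estimate nor Alspach's theorem, whereas the paper's finish exhibits the forbidden $3$-colored triangle explicitly. A further difference in your favour: the paper derives $\left\vert J+J\right\vert =2n$ from aperiodicity of $J$ (Corollary \ref{J-aperiodic}) alone, which taken literally does not cover simple tournaments whose symbol set is an arithmetic progression (for $J=\{1,2,3,4\}$ in $\mathbb{Z}_{9}$ the tournament is simple and $J$ is aperiodic, yet $\left\vert J+J\right\vert =2n-1$); your explicit splitting off of the arithmetic-progression case (reducing to the cyclic tournament, tight by Theorem 4.4(i) of \cite{VNL}) and your use of Theorem \ref{comp-quasi} together with the corollary of Kemperman's theorem close exactly this point. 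Only your closing aside, that Lemma \ref{AJJ} could be bypassed by considering the triangles through $0$ guaranteed by Alspach, is a sketch (one would still need to argue that those triangles force all of $P\cup Q$ into a single class); since your main argument does not rely on it, this does not affect correctness.
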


\begin{proof}
For a contradiction, let us suppose that $\overrightarrow{C}_{2n+1}(J)$ is
simple and not tight. Then there exists an externally $\overrightarrow{C}%
_{3} $-free partition of $V(\overrightarrow{C}_{2n+1}(J))$ with three equivalence classes.
By Corollary \ref{cor1}, one of such classes is singular and since the
automorphism group of $\overrightarrow{C}_{2n+1}(J)$ is vertex-transitive,
we can assume without loss of generality that the externally $%
\overrightarrow{C}_{3}$-free partition of $V(\overrightarrow{C}_{2n+1}(J))$
is $\pi =\{0\}\mid B\mid C,$ where $\left\vert B\right\vert +\left\vert
C\right\vert =2n.$ Without loss of generality, we suppose that $\left\vert
B\right\vert \leq \left\vert C\right\vert .$ Taking $A=\{0\}$ in Lemma \ref%
{AJJ}, we have that%
\begin{equation*}
J_{B}\cap (C-J)\subseteq \overline{-J-J}=\overline{-(J+J)},
\end{equation*}%
where $J_{B}$ $=J\cap B$ (similarly, $-J_{B}=\left( -J\right) \cap B$, $%
J_{c}=J\cap C$ and $-J_{C}=\left( -J\right) \cap C)$)$.$ Since $0\notin J+J$
and $J$ is aperiodic by Corollary \ref{J-aperiodic}, it follows that
\begin{equation*}
\left\vert J+J\right\vert =2\left\vert J\right\vert =2n.
\end{equation*}%
Therefore, $0\notin -(J+J)$ and
\begin{equation*}
\left\vert -(J+J)\right\vert =2\left\vert -J\right\vert =2\left\vert
J\right\vert =2n.
\end{equation*}%
From this, $\overline{-(J+J)}=\{0\}$ and consequently $J_{B}\cap
(C-J)\subseteq \{0\}.$ Observe that if $J_{B}\cap (C-J)=\{0\},$ then $0\in J$
and $0\in B,$ which is a contradiction. Therefore, \textit{\ }$J_{B}\cap
(C-J)=\varnothing .$ By (\ref{SumInt}), this equality is equivalent to%
\begin{equation*}
(J_{B}+J)\cap C=\varnothing \Leftrightarrow J_{B}+J\subseteq \overline{C}%
=B\cup \{0\}.
\end{equation*}%
Note that $0\notin J_{B}+J\subseteq J+J$ and so, $$J_{B}+J\subseteq
B=J_{B}\cup \left( -J_{B}\right) .$$ Then
\begin{equation*}
\left\vert J_{B}\right\vert +\left\vert J\right\vert \leq \left\vert
J_{B}+J\right\vert \leq \left\vert J_{B}\right\vert +\left\vert
-J_{B}\right\vert \leq n,
\end{equation*}%
where for the inequality on the left side we use that
$\overrightarrow{C}_{p}(J)$ is simple, Theorem \ref{Kemperman2} and
Corollary \ref{J-aperiodic} and
for that on the right side, we use that $J_{B}\cap (-J_{B})=\varnothing ,$ $%
\left\vert B\right\vert +\left\vert C\right\vert =2n$ and $\left\vert
B\right\vert \leq \left\vert C\right\vert $. It follows from the last
inequality that $\left\vert J\right\vert =\left\vert -J_{B}\right\vert =n.$

Since $-J_{B}\subseteq -J$ and $\left\vert J\right\vert =\left\vert
-J\right\vert =n$, we have that $\left\vert -J_{B}\right\vert =\left\vert
-J\right\vert =n$ and so,
\begin{equation*}
\left\vert -J_{B}\right\vert =n\Leftrightarrow -J_{C}=\varnothing .
\end{equation*}%
We conclude that $-J\subseteq B$ and $C\subseteq J.$ Then, the tournament $%
\overrightarrow{C}_{2n+1}(J)$ contains the arcs $0\longrightarrow j$ for all
$j\in J_{C}=C.$ By the already mentioned result by Alspach (see\cite{Als}),
there exists a directed cyclic triangle
\begin{equation*}
\begin{array}{ccccc}
&  & 0 &  &  \\
& \swarrow &  & \nwarrow &  \\
J\supseteq C\ni j &  & \longrightarrow &  & j^{\prime }\in -J\subseteq B%
\end{array}%
.
\end{equation*}%
We have obtained a contradiction, $\pi =\{0\}\mid B\mid C$ is an externally $%
\overrightarrow{C}_{3}$-free partition of $V(\overrightarrow{C}_{2n+1}(J)).$
\end{proof}

\begin{corollary}[\protect\cite{Ll-NL}, Theorem 4]
Let $p$ be an odd prime number. Then $\overrightarrow{C}_{p}(J)$ is tight.
\end{corollary}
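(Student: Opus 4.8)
The plan is to obtain this as an immediate corollary of Theorem~\ref{circ-tight}, so the only thing that needs to be checked is that a circulant tournament of prime order is necessarily simple. Once that is in hand, Theorem~\ref{circ-tight} gives $\overrightarrow{\omega}_{3}(\overrightarrow{C}_{p}(J))=2$ directly.

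First I would recall that a digraph is simple precisely when it cannot be written as a nontrivial composition $D[F]$, i.e. one in which both $D$ and $F$ have at least two vertices (otherwise every digraph would fail to be simple via $D[K_{1}]$ or $K_{1}[D]$). Suppose, for a contradiction, that $\overrightarrow{C}_{p}(J)\cong D[F]$ is such a composition. Since $\overrightarrow{C}_{p}(J)$ is a tournament, each copy $F_{v}$ is an induced subdigraph and hence a tournament, and the quotient $D$ is a tournament as well; in particular $\left\vert V(D[F])\right\vert =\left\vert V(D)\right\vert \cdot \left\vert V(F)\right\vert$. Thus $p=\left\vert V(D)\right\vert \cdot \left\vert V(F)\right\vert$ with both factors at least $2$, contradicting the primality of $p$. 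Hence $\overrightarrow{C}_{p}(J)$ is simple, and applying Theorem~\ref{circ-tight} we conclude that $\overrightarrow{C}_{p}(J)$ is tight.

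The only point requiring any care is the elementary observation that in a nontrivial composition the orders of the two factors multiply to the order of the product; this is where the hypothesis that $p$ is prime is used, and it is the (very mild) ``main obstacle''. No additive number theory is needed beyond what Theorem~\ref{circ-tight} has already absorbed. As an alternative route one could argue via Theorem~\ref{comp-quasi}: the cyclic group $\mathbb{Z}_{p}$ has no nontrivial subgroups, so $J$ cannot be quasi-periodic, whence $\overrightarrow{C}_{p}(J)$ is not a composition of circulant tournaments; however this still leaves one to rule out compositions with non-circulant factors, so the cardinality argument above is the cleaner one and is the version I would present.
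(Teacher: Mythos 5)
Your proof is correct and matches the paper's intent: the corollary is stated as an immediate consequence of Theorem~\ref{circ-tight}, with the only needed observation being exactly the one you make, namely that a nontrivial composition has composite order, so a tournament of prime order is simple. Your closing remark about the limitation of the route through Theorem~\ref{comp-quasi} (it only excludes compositions with circulant factors) is a sensible point of care, and the cardinality argument you present avoids it.
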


Using Proposition \ref{w3-comp} and Theorems \ref{comp-quasi} and \ref%
{circ-tight}, we can state the following characterization of simple
circulant tournaments (and therefore, of compositions of circulant
tournaments).

\begin{corollary}
Let $\overrightarrow{C}_{2n+1}(J)$ be a circulant tournament. The following
conditions are equivalent:

\begin{enumerate}
\item[(i)] $\overrightarrow{C}_{2n+1}(J)$ is tight.

\item[(ii)] $\overrightarrow{C}_{2n+1}(J)$ is simple.

\item[(iii)] $J$ is aperiodic. \label{char}
\end{enumerate}
\end{corollary}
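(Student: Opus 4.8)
The plan is to prove the three conditions equivalent through the cycle of implications $(ii)\Rightarrow(i)\Rightarrow(iii)\Rightarrow(ii)$, using the results already established. I will repeatedly use the fact noted at the beginning of Section~3 that, since $\gcd(|J|,2n+1)=1$, a symbol set $J$ is never periodic, so that $J$ is either aperiodic or quasi-periodic, and these two alternatives are mutually exclusive.

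The implication $(ii)\Rightarrow(i)$ is immediate: it is precisely Theorem~\ref{circ-tight}. For $(i)\Rightarrow(iii)$ I would argue by contraposition. If $J$ is not aperiodic then it is quasi-periodic, so Theorem~\ref{comp-quasi} presents $\overrightarrow{C}_{2n+1}(J)$ as a composition $T_{1}[T_{2}]$ of two circulant tournaments, each of order at least $3$. Each $T_{i}$ is a circulant tournament, hence $\overrightarrow{\omega}_{3}$-keen by Theorem~\ref{keen-w3}, and satisfies $\overrightarrow{\omega}_{3}(T_{i})\ge 2$; so Proposition~\ref{w3-comp} gives
\[
\overrightarrow{\omega}_{3}\bigl(\overrightarrow{C}_{2n+1}(J)\bigr)=\overrightarrow{\omega}_{3}(T_{1})+\overrightarrow{\omega}_{3}(T_{2})-1\ge 2+2-1=3,
\]
so $\overrightarrow{C}_{2n+1}(J)$ is not tight. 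Contraposing yields $(i)\Rightarrow(iii)$.

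The remaining implication $(iii)\Rightarrow(ii)$ is where I expect the real work. I would argue contrapositively: assuming $\overrightarrow{C}_{2n+1}(J)$ is \emph{not} simple, I must show that $J$ is quasi-periodic. Write $\overrightarrow{C}_{2n+1}(J)\cong D[F]$ with $|V(D)|,|V(F)|\ge 2$; since the left-hand side is a regular tournament, so are $D$ and $F$, and the family of copies $\{V(F_{v}):v\in V(D)\}$ is a non-trivial congruence of $\overrightarrow{C}_{2n+1}(J)$. The crucial point is to replace this congruence by the partition of $\mathbb{Z}_{2n+1}$ into cosets of a proper non-trivial subgroup. For this one passes to the canonical partition of $\overrightarrow{C}_{2n+1}(J)$ into maximal proper strong modules, which is non-trivial (as $\overrightarrow{C}_{2n+1}(J)$ is not simple) and invariant under its automorphism group; since the latter is vertex-transitive, all classes have the same size and the partition is invariant under the regular action of $\mathbb{Z}_{2n+1}$ on itself, so the class containing $0$ is a subgroup $H$ and the classes are the cosets of $H$. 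As disjoint modules of a tournament are always comparable, this partition is a congruence, so $\overrightarrow{C}_{2n+1}(J)$ is exhibited as a composition of \emph{circulant} tournaments, and Theorem~\ref{comp-quasi} then forces $J$ to be quasi-periodic, i.e.\ not aperiodic; this closes the cycle.

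The main obstacle is thus the passage, in $(iii)\Rightarrow(ii)$, from the hypothesis that $\overrightarrow{C}_{2n+1}(J)$ is not isomorphic to a composition of \emph{any} digraphs to the conclusion that it is a composition of \emph{circulant} tournaments: this is the one step that leaves the self-contained additive track and appeals to the canonicity of the substitution (modular) decomposition of a tournament, the same theory used for Proposition~\ref{decomp} and surveyed in \cite{K-I} and \cite{D-I}. If one prefers to avoid it, a second route is to prove $(iii)\Rightarrow(i)$ directly by re-running the proof of Theorem~\ref{circ-tight} under the weaker assumption that $J$ is neither an arithmetic progression nor quasi-periodic — which already supplies $|J+J|=2|J|$ and the aperiodicity of $J+J$, and in which the appeal to simplicity is needed only to know that $\overrightarrow{C}_{2n+1}(J)$ is $\overrightarrow{\omega}_{3}$-keen (now automatic by Theorem~\ref{keen-w3}) — the leftover case of $J$ an arithmetic progression being the cyclic tournament, which is tight by Theorem~4.4(i) of \cite{VNL}; together with $(i)\Rightarrow(iii)$ and $(ii)\Rightarrow(i)$ this also completes the equivalence.
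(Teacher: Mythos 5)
Your main cycle $(ii)\Rightarrow(i)\Rightarrow(iii)\Rightarrow(ii)$ is correct, and its skeleton is the same as the paper's: $(ii)\Rightarrow(i)$ is Theorem~\ref{circ-tight}, and $(i)\Rightarrow(iii)$ is Theorem~\ref{comp-quasi} combined with Theorem~\ref{keen-w3} and Proposition~\ref{w3-comp} --- precisely the three results the paper cites. Where you genuinely add something is in $(iii)\Rightarrow(ii)$: the paper passes from ``not simple'' to ``composition of \emph{circulant} tournaments'' only through the assertion, in the paragraph preceding Proposition~\ref{decomp}, that the factors of a non-simple circulant tournament are circulant ``by Proposition~\ref{comp}'', which that proposition does not literally yield. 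Your argument via the canonical partition into maximal proper strong modules (Aut-invariant, hence by vertex-transitivity a partition into translates of the class of $0$, which is then a subgroup $H$, so that $\overrightarrow{C}_{2n+1}(J)$ is a composition of circulant tournaments and Theorem~\ref{comp-quasi} forces $J$ quasi-periodic) supplies an actual proof of this step, so at this one point your write-up is more complete than the paper's. Two remarks: first, ``non-trivial because the tournament is not simple'' needs a word about the degenerate (linear) case of the modular decomposition, where the maximal proper strong modules can all be singletons even though the tournament is a composition (transitive tournaments); here that case is excluded because the quotient inherits regularity (or vertex-transitivity), while a transitive tournament on at least two vertices is neither. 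Second, if you only want ``not simple $\Rightarrow$ not tight'' you can bypass the decomposition theory entirely: for any composition $D[F]$ of tournaments with $\left\vert V(D)\right\vert ,\left\vert V(F)\right\vert \geq 2$, the partition consisting of one vertex of a fixed copy $F_{v_{0}}$, the rest of $F_{v_{0}}$, and all remaining vertices is externally $\overrightarrow{C}_{3}$-free (any vertex outside $F_{v_{0}}$ relates uniformly to that copy), so $\overrightarrow{\omega }_{3}\geq 3$ without knowing the factors are circulant.

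Your proposed ``second route'', however, does not close the equivalence: the implications $(ii)\Rightarrow(i)$, $(i)\Rightarrow(iii)$ and $(iii)\Rightarrow(i)$ give $(i)\Leftrightarrow(iii)$ and $(ii)\Rightarrow(i)$, but no implication \emph{into} $(ii)$. To conclude that a tight (equivalently, aperiodic) circulant tournament is simple you must still rule out an arbitrary decomposition $D[F]$, which is exactly the step that route was designed to avoid; so either your modular-decomposition argument or a direct bound such as the $\overrightarrow{\omega }_{3}\geq 3$ construction above remains indispensable.
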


From the definitions of the acyclic and the $\overrightarrow{C}_{3}$-free
disconnection, we have that $2\leq \overrightarrow{\omega }(T)\leq
\overrightarrow{\omega }_{3}(T)$ for every tournament $T$. Using again
Theorems \ref{keen-w3} and \ref{keen-w} combined with Proposition \ref%
{w3-comp}, we obtain the following corollary which solves Problem 6.6.3 of
\cite{VNL}.

\begin{corollary}
$\overrightarrow{\omega }(\overrightarrow{C}_{2n+1}(J))=\overrightarrow{%
\omega }_{3}(\overrightarrow{C}_{2n+1}(J))$ for every positive integer $n.$ %
\label{final}
\end{corollary}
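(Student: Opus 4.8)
The plan is to split the argument into two cases according to Corollary \ref{char}, namely whether $\overrightarrow{C}_{2n+1}(J)$ is simple or is a composition of circulant tournaments, and in both cases to reduce everything to the tightness of the simple factors together with the sandwich inequality $2\leq \overrightarrow{\omega}(T)\leq \overrightarrow{\omega}_{3}(T)$ recalled just above the corollary.

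First I would dispose of the case in which $\overrightarrow{C}_{2n+1}(J)$ is simple. By Theorem \ref{circ-tight} it is tight, i.e. $\overrightarrow{\omega}_{3}(\overrightarrow{C}_{2n+1}(J))=2$. Then $2\leq \overrightarrow{\omega}(\overrightarrow{C}_{2n+1}(J))\leq \overrightarrow{\omega}_{3}(\overrightarrow{C}_{2n+1}(J))=2$ forces $\overrightarrow{\omega}(\overrightarrow{C}_{2n+1}(J))=2=\overrightarrow{\omega}_{3}(\overrightarrow{C}_{2n+1}(J))$, which is the claim in this case (and incidentally records that $\overrightarrow{\omega}(\overrightarrow{C}_{2n+1}(J))=2$ for simple circulant tournaments).

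Next, suppose $\overrightarrow{C}_{2n+1}(J)$ is not simple, hence a composition of circulant tournaments. By Proposition \ref{decomp} there exist simple circulant tournaments $T_{1},\ldots ,T_{k}$ with $\overrightarrow{C}_{2n+1}(J)$ isomorphic to $T_{1}[T_{2}[\cdots T_{k-1}[T_{k}]\cdots ]]$. Each $T_{i}$, being a circulant tournament, is both $\overrightarrow{\omega}$-keen and $\overrightarrow{\omega}_{3}$-keen by Theorems \ref{keen-w} and \ref{keen-w3}; and, being simple, is tight by Theorem \ref{circ-tight}, so that $\overrightarrow{\omega}(T_{i})=\overrightarrow{\omega}_{3}(T_{i})=2$ (the first equality again by the sandwich inequality). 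I would then peel off the factors one at a time from the left: set $F_{k}=T_{k}$ and $F_{j}=T_{j}[F_{j+1}]$ for $1\leq j\leq k-1$, so that $F_{1}$ is isomorphic to $\overrightarrow{C}_{2n+1}(J)$. Applying Proposition \ref{w3-comp} with the $\overrightarrow{\omega}_{3}$-keen (resp. $\overrightarrow{\omega}$-keen) digraph $T_{j}$ on the left and $F_{j+1}$ on the right gives $\overrightarrow{\omega}_{3}(F_{j})=\overrightarrow{\omega}_{3}(T_{j})+\overrightarrow{\omega}_{3}(F_{j+1})-1=\overrightarrow{\omega}_{3}(F_{j+1})+1$ and likewise $\overrightarrow{\omega}(F_{j})=\overrightarrow{\omega}(F_{j+1})+1$. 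A downward induction on $j$ then yields $\overrightarrow{\omega}(F_{j})=\overrightarrow{\omega}_{3}(F_{j})=k-j+2$ for every $j$, and evaluating at $j=1$ gives $\overrightarrow{\omega}(\overrightarrow{C}_{2n+1}(J))=\overrightarrow{\omega}_{3}(\overrightarrow{C}_{2n+1}(J))=k+1$.

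I do not expect a genuine obstacle here: the corollary is essentially an assembly of the machinery already developed, and the only point that needs a little care is checking that Proposition \ref{w3-comp} is legitimately applicable at each peeling step. This holds because at every step the left-hand factor is one of the simple circulant tournaments $T_{j}$, and these are keen for both parameters by Theorems \ref{keen-w} and \ref{keen-w3}; alternatively, one could invoke the ``moreover'' clause of Proposition \ref{w3-comp} to see that every intermediate composition $F_{j}$ is itself $\overrightarrow{\omega}$-keen and $\overrightarrow{\omega}_{3}$-keen, though since we always cut off a single simple factor on the left this is not strictly needed. The real content of the statement sits in the earlier results, above all Theorem \ref{circ-tight}.
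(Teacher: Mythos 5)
Your proposal is correct and follows essentially the same route as the paper, which derives the corollary from the sandwich inequality $2\leq \overrightarrow{\omega }(T)\leq \overrightarrow{\omega }_{3}(T)$, Theorem \ref{circ-tight} for the simple case, and Proposition \ref{w3-comp} together with Theorems \ref{keen-w3} and \ref{keen-w} applied along the decomposition of Proposition \ref{decomp}. Your write-up merely makes explicit the induction that the paper leaves as a one-sentence remark.
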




\begin{thebibliography}{99}


\bibitem{Als} B. Alspach, Cycles of each length in regular tournaments%
\textit{, }Canad. Math. Bull 10 (1967) 283-285.

\bibitem{D-I} W. D\"{o}rfler and W. Imrich, Das lexikographische Produkt
gerichteter Graphen, Monatsh. Math. 76 (1972) 21-30.

\bibitem{GN} H. Galeana-S\'{a}nchez and V. Neumann-Lara, A class of tight
circulant tournaments,\ Discuss. Math. Graph Theory 20 (2000) 109-128.

\bibitem{Gryn} D. J. Grynkiewicz, Quasi-periodic decompositions and the
Kemperman structure theorem, European J. Combin. 26 (2005) 559-575.

\bibitem{Kemp} J. H. B. Kemperman, On small sumsets in an abelian group,
Acta Math. 103 (1960) 63-88.

\bibitem{K-I} W. Imrich and S. Klav\v{z}ar, Product graphs. Structure and
recognition. Wiley-Interscience Series in Discrete Mathematics and
Optimization. Wiley-Interscience, New York, 2000.

\bibitem{Kne53} M. Kneser, Absch\"{a}tzung der asymtotischen Dichte von
Summenmengen, Math. Z. 58 (1953) 459-484.

\bibitem{Ll-NL} B. Llano and V. Neumann-Lara, Circulant tournaments of prime
order are tight, Discrete Math. 308 (2008) 6056-6063.

\bibitem{Ll-O} B. Llano and M. Olsen, Infinite families of tight regular
tournaments, Discuss. Math. Graph Theory 27 (2007) 299-311.

\bibitem{Moon} J. W. Moon, Embedding tournaments in simple tournaments,
Discrete Math. 2 (1972) 389-395.

\bibitem{VNL} V. Neumann-Lara, The acyclic disconnection of a digraph,
Discrete Math. 197-198 (1999) 617-632.

\bibitem{Reid} K. B. Reid, Tournaments, in: Jonathan Gross, Jay Yellen
(Eds.), Handbook of Graph Theory, CRC Press, 2004, 156-184.

\bibitem{Vos} A. G. Vosper, The critical pairs of subsets of a group of
prime order,\textit{\ }J. London Math. Soc. 31 (1956) 200-205.


\end{thebibliography}
\end{document}